\documentclass[10pt,a4paper]{article}

\usepackage{amsmath,amsthm,amsbsy}
\usepackage{amsfonts}
\usepackage{amssymb}
\usepackage{graphicx}
\usepackage{caption}
\usepackage{subcaption}

\usepackage[ulem=normalem]{changes}

\usepackage{tikz}
\usetikzlibrary{decorations.pathreplacing}

\newtheorem{definition}{Definition}

\newtheorem{theorem}{Theorem}
\newtheorem{lemma}{Lemma}
\newtheorem{corollary}{Corollary}

\theoremstyle{definition}
\newtheorem{ex}{Example}

\def\e{\mathop{\boldsymbol e}\nolimits}
\def\t{\mathop{\boldsymbol t}\nolimits}
\def\u{\mathop{\boldsymbol u}\nolimits}
\def\v{\mathop{\boldsymbol v}\nolimits}
\def\w{\mathop{\boldsymbol w}\nolimits}

\def\T{\mathop{\boldsymbol T}\nolimits}
\def\Tm{\mathop{\boldsymbol T}_{\!m+2}\nolimits}
\def\Tt{\mathop{\boldsymbol T}_{\!3}\nolimits}
\def\TT{\mathop{\boldsymbol T}_{\!4}\nolimits}

\def\O{\mathop{\mathcal O}\nolimits}
\def\C{\mathop{\mathcal C}\nolimits}
\def\P{\mathop{\mathcal P}\nolimits}

\begin{document}

\title{Limiting Behaviour of Fr\'echet Means \\ in the Space of Phylogenetic Trees}

\author{D. Barden\thanks{Girton College, University of Cambridge, Cambridge, CB3 0JG, UK ({\tt d.barden@dpmms.cam.ac.uk}).} \and H. Le\thanks{School of Mathematical Sciences, University of Nottingham, Nottingham, NG7 2RD, UK ({\tt huiling.le@nottingham.ac.uk}).} \and M. Owen\thanks{Department of Mathematics and Computer Science, Lehman College, City University of New York, Bronx, 10468, USA
({\tt megan.owen@lehman.cuny.edu}).}} 

\date{}

\maketitle

\begin{abstract}
As demonstrated in our previous work on $\TT$, the space of phylogenetic trees with four leaves, the global, as well as the local, topological structure of the space plays an important role in the non-classical limiting behaviour of the sample Fr\'echet means of a probability distribution on $\TT$. Nevertheless, the techniques used in that paper were specific to $\TT$ and cannot be adapted to analyse Fr\'echet means in the space ${\boldsymbol T}_{\!m}$ of phylogenetic trees with $m(\geqslant5)$ leaves. To investigate the latter, this paper first studies the log map of ${\boldsymbol T}_{\!m}$, a generalisation of the inverse of the exponential map on a Riemannian manifold. Then, in terms of a modified version of the log map, we characterise Fr\'echet means in ${\boldsymbol T}_{\!m}$ that lie in top-dimensional or co-dimension one strata. We derive the limiting distributions for the corresponding sample Fr\'echet means, generalising our previous results. In particular, the results show that, although they are related to the Gaussian distribution, the forms taken by the limiting distributions depend on the co-dimensions of the strata in which the Fr\'echet means lie.
\end{abstract}

\noindent\textbf{Keywords:} central limit theorem; Fr\'echet mean; log map; phylogenetic trees; stratified manifold.

\noindent\textbf{AMS MSC 2010:} 60D05; 60F05.

\section{Introduction}

The concept of Fr\'echet means of random variables on a metric space is a generalisation of the least mean-square characterisation of Euclidean means: a point is a Fr\'echet mean of a probability measure $\mu$ on a metric space $(\boldsymbol{M},d)$ if it minimises the Fr\'echet function for $\mu$ defined by
\begin{eqnarray*}
x\mapsto\frac{1}{2}\int_{\boldsymbol M}d(x,x')^2d\mu(x'),
\end{eqnarray*}
provided the integral on the right side is finite for at least one point $x$. Note that the factor $1/2$ will simplify some later computations. The concept of Fr\'echet means has recently been used in the statistical analysis of data of a non-Euclidean nature. We refer readers to \cite{BP}, \cite{BP2}, \cite{DLPW}, \cite{DM} and \cite{KL}, as well as the references therein, for the relevance of, and recent developments in, the study of various aspects of Fr\'echet means in Riemannian manifolds. The Fr\'echet mean has also been studied in the space of phylogenetic trees, as motivated by \cite{BHV} and \cite{SH}. It was first introduced to this space independently by \cite{MB} and \cite{MOP}, which both gave methods for computing it. Limiting distributions of sample Fr\'echet means in the space of phylogenetic trees with four leaves were studied in \cite{BLO}, and it was used to analyse tree-shaped medical imaging data in \cite{FOPWTDB}, while principal geodesic analysis on the space of phylogenetic trees, a related statistical issue, was studied in \cite{TN}, \cite{TN2}, and \cite{FOPWTDB}.

A phylogenetic tree represents the evolutionary history of a set of organisms and is an important concept in evolutionary biology. Such a tree is a contractible graph, that is, a connected graph with no circuits, where one of its vertices of degree 1 is distinguished as the root of the tree and the other such vertices are (labelled) leaves. The space ${\boldsymbol T}_{\!m}$ of phylogenetic trees with $m$ leaves was first introduced in \cite{BHV}. The important feature of the space is that each point represents a tree with a particular structure and specified lengths of its edges in such a way that both the structure and the edge lengths vary continuously in a natural way throughout the space. The space is constructed by identifying faces of a disjoint union of Euclidean orthants, each corresponding to a different tree structure.  In particular, it is a topologically stratified space and also a $CAT(0)$, or globally non-positively curved, space (cf. \cite{BH}). A detailed account of the underlying geometry of tree spaces can be found in \cite{BHV} and a brief summary can be found in the Appendix to \cite{BLO}.

As demonstrated in \cite{BB} and \cite{HHLMMMNOPS} for $\Tt$ and in \cite{BLO} for $\TT$, the global, as well as the local, topological structure of the space of phylogenetic trees plays an important role in the limiting behaviour of sample Fr\'echet means. These results imply that the known results (cf. \cite{KL}) on the limiting behaviour of sample Fr\'echet means in Riemannian manifolds cannot be applied directly. Moreover, due to the increasing complexity of the structure of ${\boldsymbol T}_{\!m}$ as $m$ increases, the techniques used in \cite{BLO} for $\TT$ could not be adapted to derive the limiting behaviour of sample Fr\'echet means in ${\boldsymbol T}_{\!m}$ for general $m$. For example, although the natural isometric embedding of $\TT$ is in 10-dimensional Euclidean space $\mathbb{R}^{10}$, it is intrinsically 2-dimensional, being constructed from 15 quadrants identified three at a time along their common axes. This made it possible in \cite{BLO}, following \cite{BHV}, to represent $\TT$ as a union of certain quadrants embedded in $\mathbb{R}^3$ in such a way that it was possible to visualise the geodesics explicitly. That is, naturally, not possible for $m>4$. The need to describe geodesics explicitly arises as follows. In a complete manifold of non-positive curvature, the global minimum of a Fr\'echet function would be characterised by the vanishing of its derivative. In tree space, as in general stratified spaces, such derivatives do not exist at non-manifold points. However directional derivatives for a Fr\'echet function, which serve our purpose, do exist at all points and for all tangential directions. They are defined via the log map, which is a generalisation of the inverse of the exponential map of Riemannian manifolds, and is expressed in terms of the lengths and initial tangent vectors of unit speed geodesics. In this paper, we derive these data using the geometric structure of geodesics in ${\boldsymbol T}_{\!m}$ obtained in \cite{MO} and \cite{OP}. As a result, we are able to establish a central limit theorem for \textit{iid} random variables having probability measure $\mu$ that has its Fr\'echet mean lying in a top-dimensional stratum. We are also able to take advantage of the special structure of tree space in the neighbourhood of a stratum of co-dimension one to obtain the analogous results when the Fr\'echet mean of $\mu$ lies in such a stratum. For the latter case, we show that the limiting distribution can take one of three possible forms, distinguished by the nature of its support. Unlike the Euclidean case, the limiting distributions in both cases here are expressed in terms of the log map at the Fr\'echet mean of $\mu$. This is similar to the central limit theorem for sample Fr\'echet means on Riemannian manifolds (cf. \cite{KL}). Although it may appear non-intuitive, it allows us to use the standard results on Euclidean space. For example, in the top-dimensional case, the limiting distribution is a Gaussian distribution and so some classical hypothesis tests can be carried out in a similar fashion to hypothesis tests for data lying in a Riemannian manifold as demonstrated in \cite{DM} for the statistical analysis of shape. However, in the case of co-dimension one, the limiting distribution is non-standard and so the classical hypothesis tests can not easily be modified to apply. Further investigation is required and we aim to pursue this, as well as the applications of the results to phylogenetic trees, in future papers.

\vskip 6pt
The remainder of the paper is organised as follows. In order to obtain the directional derivatives of a Fr\'echet function, we need an explicit expression for the log map that is amenable to calculation. This in turn requires a detailed analysis of the geodesics which we carry out in the next section using results from \cite{MO} and \cite{OP}. The resulting expression \eqref{eqn2c} for the log map in Theorem \ref{thm1} and its modification \eqref{eqn3a} are then used in the following two sections which study the limiting distributions for sample Fr\'echet means in ${\boldsymbol T}_{\!m}$; section 3 concentrates on the case when the Fr\'echet means lie in the top-dimensional strata, while section 4 deals with the case when they lie in the strata of co-dimension one. In the final section, we discuss some of the problems involved in generalising our results to the case that the Fr\'echet means lie in strata of arbitrary co-dimension.

\section{The log map on a top-dimensional stratum}

The log map is the generalisation of $\exp^{-1}$, the inverse of the exponential map on a Riemannian manifold. For a tree $T^*$ in ${\boldsymbol T}_{\!m}$ the log map, $\log_{T^*}$, at $T^*$ takes the form
\begin{eqnarray}
\log_{T^*}(T)=d(T^*\!,\,T)\,\v(T)
\label{eqn1}
\end{eqnarray}
as $T$ varies, where $\v(T)$ is a unit vector at $T^*$ along the geodesic from $T^*$ to $T$ and $d(T^*\!,\,T)$ is the distance between $T^*$ and $T$ along that geodesic. This is well-defined since ${\boldsymbol T}_{\!m}$ is a globally non-positively curved space, or $CAT(0)$-space (cf. \cite{BH}), and so this geodesic is unique. 

In order to analyse this log map further, we first recall some relevant aspects of the structure of trees and tree spaces. Apart from the roots and leaves of a tree, which are the vertices of degree 1 mentioned above, there are no vertices of degree two and the remaining vertices, of degree at least 3, are called internal. An edge is called internal if both its vertices are. A tree with $m$ labelled leaves and unspecified internal edge lengths determines a combinatorial type. Then ${\boldsymbol T}_{\!m}$ is a stratified space with a stratum for each such type: a given type with $k\,\,(\leqslant m-2)$ internal edges determines a stratum with $k$ positive parameters ranging over the points of an open $k$-dimensional Euclidean orthant, each point representing the tree with those specific parameters as the lengths of its internal edges. Note that, for this paper, we shall only consider the internal edges of a tree. So by `edge' we always mean `internal edge' and, to simplify the notation, we consider $\Tm$, rather than ${\boldsymbol T}_{\!m}$. 

The metric on $\Tm$ is induced by regarding the identification of a stratum $\tau$ with a Euclidean orthant $\mathcal O$ as an isometry. Then each face, or boundary orthant of co-dimension one, of $\mathcal O$ is identified with a boundary stratum $\sigma$ of $\tau$. A tree of type $\sigma$ is obtained from a tree of type $\tau$ by coalescing the vertices $v_1$ and $v_2$ of degree $p$ and $q$ of the edge whose parameter has become zero, to form a new vertex $v$ of degree $p+q-2$. See Figure \ref{fig1}.
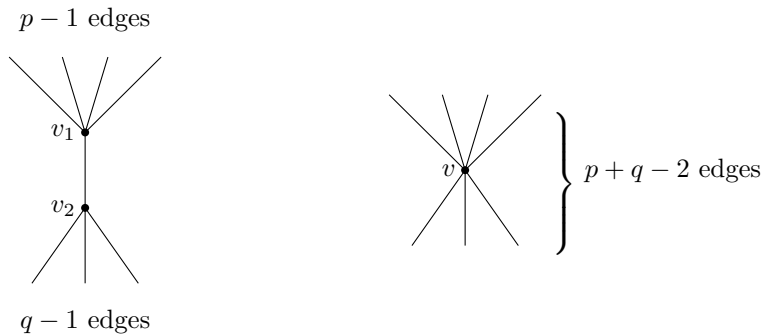
\begin{figure}
\begin{center}
\begin{tikzpicture}
\draw (-5,0.5) -- (-5,-0.5);
\fill (-5,0.5) circle (1.5pt);
\node at (-5,0.5) [left] {$v_1$};
\fill (-5,-0.5) circle (1.5pt);
\node at (-5,-0.5) [left] {$v_2$};
\draw (-6,1.5) -- (-5,0.5);
\draw (-5.3,1.5) -- (-5,0.5);
\draw (-4.7,1.5) -- (-5,0.5);
\draw (-4,1.5) -- (-5,0.5);
\node at (-5,2) {$p-1$ edges};
\draw (-5.7,-1.5) -- (-5,-0.5);
\draw (-5,-1.5) -- (-5,-0.5);
\draw (-4.3,-1.5) -- (-5,-0.5);
\node at (-5,-2) {$q-1$ edges};
\fill (0,0) circle (1.5pt);
\node at (0,0) [left] {$v$};
\draw (-1,1) -- (0,0);
\draw (-0.3,1) -- (0,0);
\draw (0.3,1) -- (0,0);
\draw (1,1) -- (0,0);
\draw (-0.7,-1) -- (0,0);
\draw (-0,-1) -- (0,0);
\draw (0.7,-1) -- (0,0);
\node at (2.5,0) {$\smash{\raisebox{-5pt}{$\left.\rule{0pt}{30pt}\right\}$}}\,\, p+q-2$ edges};
\end{tikzpicture}
\end{center}
\caption{The edge between vertices $v_1$ and $v_2$ shrinks to 0 to form a vertex of degree $p+q+2$.}
\label{fig1}
\end{figure}

We are particularly interested in the top-dimensional strata. These are formed by binary trees, in which all internal vertices have degree 3. A binary tree with $m+2$ leaves has $m+1$ internal vertices and $m$ internal edges so that the corresponding stratum has dimension $m$. There are $(2m+1)!!$ such strata in $\Tm$ (cf. \cite{ES}). For these strata the boundary relation results in two adjacent vertices of degree 3 coalescing to form a vertex of degree 4. Since each vertex of degree 4 can be formed 3 different ways, each stratum of co-dimension one is a component of the boundary of three different top-dimensional strata.  Figure~\ref{fig:3_quadrants} shows an example of these strata in $\T_4$.

\begin{figure}
	\centering
	\includegraphics[width=3in]{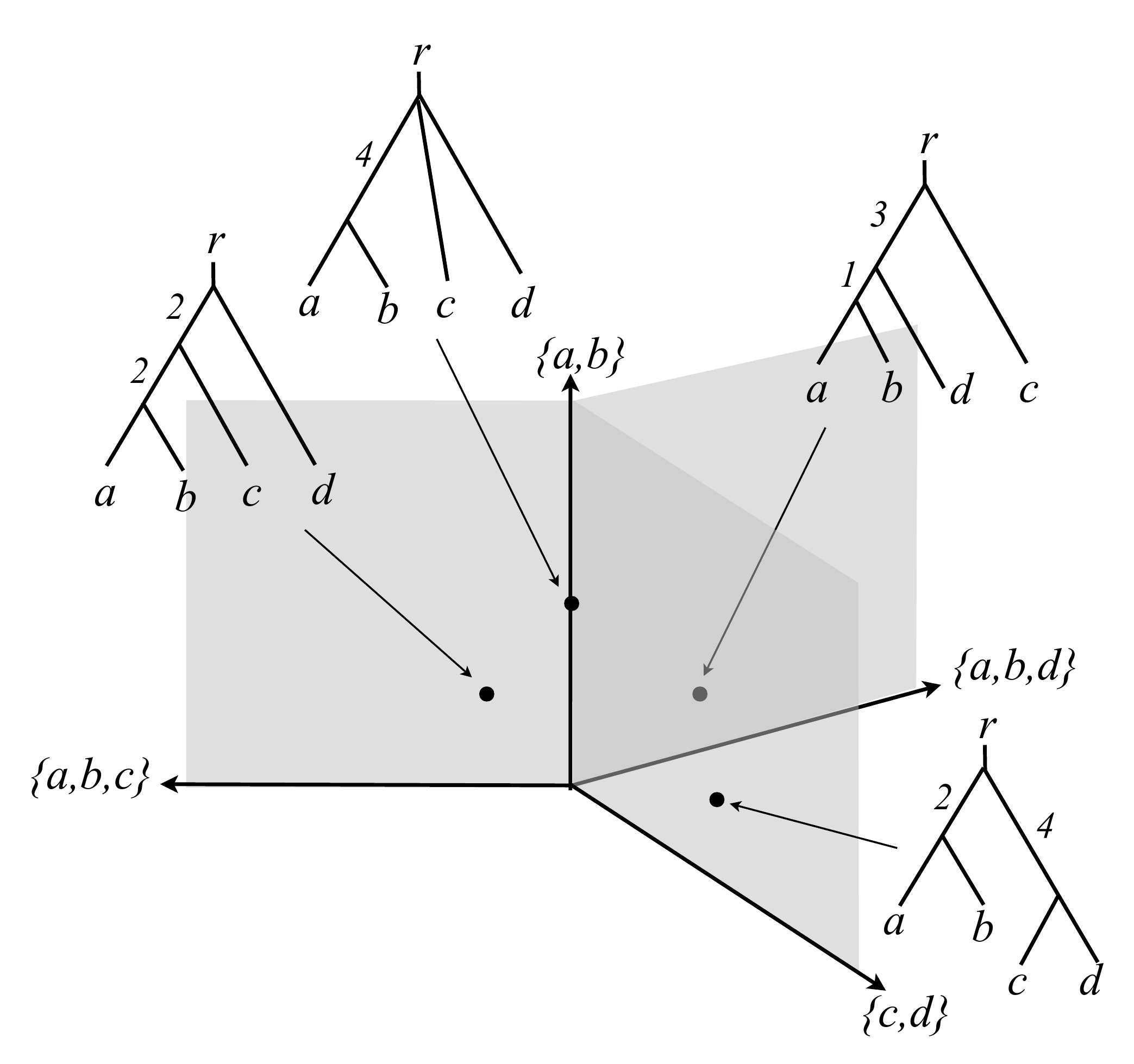}
	\caption{Three adjacent top-dimensional strata in $\T_4$ and their shared co-dimension one stratum.  A sample tree is shown for each stratum, and the axes are labelled by the corresponding edge-type.}
	\label{fig:3_quadrants}
\end{figure}

\vskip 6pt
If a tree $T^*$ lies in a top-dimensional stratum of $\Tm$, since such a stratum can be identified with an orthant $\mathcal O$ in $\mathbb{R}^m$, we may identify the tangent space to $\Tm$ at $T^*$ with $\mathbb{R}^m$. Then, for each point $T\in\Tm$, the geodesic from $T^*$ to $T$ in $\Tm$ will start with a linear segment in $\mathcal O$, which determines an initial \textit{unit} tangent vector $\v(T)\in\mathbb{R}^m$ at $T^*$. Thus, we may identify the image of the log map defined in \eqref{eqn1} as the vector $d(T^*,T)\v(T)$ in $\mathbb{R}^m$. 

\vskip 6pt
For example, the space $\boldsymbol{T}_{\!3}$ of trees with three leaves is the `spider': three half Euclidean lines joined at their origins. Denoting the length of the edge $e$ of $T$ by $|e|_T$, then $d(T^*\!,\,T)=\big||e|_{T^*}-|e|_T\big|$, if $T^*$ and $T$ lie in the same orthant of $\boldsymbol{T}_{\!3}$, and $d(T^*\!,\,T)=|e|_{T^*}+|e|_T$, otherwise. Thus, the log map for $\boldsymbol{T}_{\!3}$ can be expressed explicitly as
\[\log_{T^*}(T)=\left\{\begin{array}{ll}(|e|_T-|e|_{T^*})\e&\hbox{if }T\hbox{ and }T^*\hbox{ are in the same orthant;}\\-(|e|_T+|e|_{T^*})\e&\hbox{otherwise,}\end{array}\right.\]
where $\e$ is the canonical unit vector determining the orthant in which $T^*$ lies. Note that we abuse notation by calling the (single) internal edge $e$ in all 3 trees, despite these edges dividing the leaves in different ways. The explicit expression for the log map for the space $\TT$ of trees with four leaves is already much more complicated than this and was derived in \cite{BLO}. 

\vskip 6pt
To obtain the expression for the log map at $T^*$ for the space $\Tm$ of trees with $m+2$ ($m>2$) leaves, we first summarise without proofs the description, given in \cite{BHV}, \cite{MO}, \cite{OP} and \cite{KV}, of the geodesic between two given trees in $\Tm$.

When an (internal) edge is removed from a tree it splits the set of the leaves plus the root into two disjoint subsets, each having at least two members, and we identify the edges from different trees that induce the same split. Each edge has a `type' that is specified by the subset of the corresponding split that does not contain the root. For example, in the tree in Figure \ref{fig:logmap_Tstar}, the edge labelled $x_3$ has the edge-type $\{a,b\}$, while the edge labelled $x_1$ has the edge-type $\{a,b,c,d\}$. There are 
\begin{eqnarray}
M=2^{m+2}-m-4
\label{eqn1c}
\end{eqnarray}
possible edge-types. Two edge-types are called \textit{compatible} if they can occur in the same tree, and $\Tm$ may be identified with a certain subset of $\mathbb{R}^M$, each possible edge-type being identified with a positive semi-axis in $\mathbb{R}^M$. To make this identification explicit, we choose a canonical order of the edges by first ordering the leaves and then taking the induced lexicographic ordering of the sets of (ordered) leaves that determine the edges. Then, if $\Sigma$ is a set of mutually compatible edge-types and $\O(\Sigma)$ is the orthant spanned by the corresponding semi-axes in $\mathbb{R}^M$, each point of $\O(\Sigma)$ represents a tree with the combinatorial type determined by $\Sigma$ and $\Tm$ is the union of all such orthants. 

For a set of edges $A$ in a tree $T$, define $\|A\|_T= \sqrt{\sum_{e\in A}|e|^2_T}$ and write $|A|$ for the number of edges in $A$. For two given trees $T^*$ and $T$, let $E^*$ and $E$ be their respective edge sets, or sets of non-trivial splits. Assume first that $T^*$ and $T$ have no common edge, i.e. $E^*\cap E=\emptyset$. Then, the geodesic from $T^*$ to $T$ can be determined as follows. 

\begin{lemma}
Let $T^*$ and $T$ be two trees with no common edges, lying in top-dimensional strata of $\Tm$. Then there is an integer $k$, $1 \leqslant k \leqslant m$, and a pair $({\cal A}, {\cal B})$ of partitions ${\cal A} = (A_1,\cdots, A_k)$ of $E^*$ and ${\cal B} = (B_1,\cdots,B_k)$ of $E$, all subsets $A_i$ and $B_j$ being non-empty, such that
\begin{enumerate}
\item[$({\rm P}1)$] for each $i > j$, the union $A_i \cup B_j$ is a set of mutually compatible edges;
\item[$({\rm P}2)$] $\frac{\|A_1\|_{T^*}}{\|B_1\|_T}\leqslant \frac{\|A_2\|_{T^*}}{\|B_2\|_T} \leqslant \cdots \leqslant \frac{\|A_k\|_{T^*}}{\|B_k\|_T}$;
\item[$({\rm P}3)$] for all $(A_i, B_i)$, there are no non-trivial partitions $C_1 \cup C_2$ of $A_i$ and $D_1 \cup D_2$ of $B_i$ such that $C_2 \cup D_1$ is a set of mutually compatible edges and $\frac{\|C_1\|_{T^*}}{\|D_1\|_T} < \frac{\|C_2\|_{T^*}}{\|D_2\|_T}$.
\end{enumerate}
The geodesic is the shortest path through the sequence of orthants $\C=(\O_0,\cdots,\O_k)$ where 
\begin{eqnarray}
\O_i= \O(B_1 \cup \cdots \cup B_i \cup A_{i+1} \cup \cdots \cup A_k)
\label{eqn1a}
\end{eqnarray}
and has length $\| ( \|A_1\|_{T^*} + \|B_1\|_T, \|A_2\|_{T^*} + \|B_2\|_T, \cdots, \|A_k\|_{T^*} + \|B_k\|_T) \|$.
\label{lem0}
\end{lemma}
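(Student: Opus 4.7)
The plan is to combine the CAT(0) structure of $\Tm$, which forces uniqueness of the geodesic between $T^*$ and $T$, with a detailed analysis of piecewise-linear paths through the orthant complex. Any candidate geodesic consists of straight segments, one in each orthant it visits, meeting along common boundary strata. First I would parametrise such a candidate by a sequence of orthants $(\O_0,\ldots,\O_k)$ with $\O_0 = \O(E^*)$ and $\O_k = \O(E)$, paired with partitions ${\cal A} = (A_1,\ldots,A_k)$ of $E^*$ and ${\cal B} = (B_1,\ldots,B_k)$ of $E$ encoding which edges disappear and which appear at each transition. Compatibility of the trees realised inside $\O_i$, whose edge set is $B_1\cup\cdots\cup B_i\cup A_{i+1}\cup\cdots\cup A_k$, forces property (P1): for $i > j$, edges of $A_i$ (still present in $\O_i$) must coexist with edges of $B_j$ (already present).

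Given such a sequence, I would compute the length of the candidate segment by segment. Inside $\O_i$ the straight segment joins the point where all edges of $A_{i+1}\cup\cdots\cup A_k$ have their full $T^*$-lengths and $B_1\cup\cdots\cup B_i$ their full $T$-lengths, to the next such corner; because each group $A_j,\,B_j$ is scaled by a single parameter while crossing its boundary, a short Euclidean calculation reduces the $i$-th contribution to $(\|A_i\|_{T^*}+\|B_i\|_T)^2$, and summing and taking a square root yields the stated length formula. The monotonicity (P2) then arises from the requirement that transitions occur in the correct arclength order: since the fractional progress along the $i$-th transition is governed by $\|A_i\|_{T^*}/\|B_i\|_T$, reversing any two ratios would destroy monotonicity of the arclength parametrisation and produce a path that is not a geodesic.

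The main obstacle is establishing the refinement condition (P3), which is the true local optimality criterion. For this I would argue by contradiction: if (P3) fails, there exist non-trivial partitions $A_i = C_1\cup C_2$ and $B_i = D_1\cup D_2$ with $C_2\cup D_1$ mutually compatible and $\|C_1\|_{T^*}/\|D_1\|_T < \|C_2\|_{T^*}/\|D_2\|_T$. Replacing the pair $(A_i,B_i)$ by the ordered pairs $(C_1,D_1),(C_2,D_2)$ and inserting the intermediate orthant $\O(B_1\cup\cdots\cup B_{i-1}\cup D_1\cup C_2\cup A_{i+1}\cup\cdots\cup A_k)$ produces a legitimate path whose $i$-th squared contribution $(\|A_i\|_{T^*}+\|B_i\|_T)^2$ is replaced by $(\|C_1\|_{T^*}+\|D_1\|_T)^2+(\|C_2\|_{T^*}+\|D_2\|_T)^2$. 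Expanding both and applying the Cauchy--Schwarz inequality
$$\sqrt{(\|C_1\|_{T^*}^2+\|C_2\|_{T^*}^2)(\|D_1\|_T^2+\|D_2\|_T^2)}\;\geq\;\|C_1\|_{T^*}\|D_1\|_T+\|C_2\|_{T^*}\|D_2\|_T,$$
strict because the two ratios differ, shows that the refined path is strictly shorter, contradicting minimality. Conversely, when (P1)--(P3) all hold no such local refinement shortens the path, and together with CAT(0) uniqueness this characterises the geodesic. For the combinatorial bookkeeping of the partitions and the algorithmic construction certifying existence of $({\cal A},{\cal B})$ meeting all three conditions I would rely on the treatments in \cite{OP} and \cite{MO}.
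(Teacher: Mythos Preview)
The paper does not actually prove this lemma: immediately before stating it, the authors say they ``summarise without proofs'' the description of geodesics from \cite{BHV}, \cite{MO}, \cite{OP} and \cite{KV}, and after the statement they simply point to \cite{OP}, Section~2.3 and Theorem~2.4, as the source. So your proposal is considerably more detailed than the paper's own treatment, which is purely a citation.

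What you sketch is essentially the content of the Owen--Provan argument that the paper is citing: (P1) forced by the requirement that each $\O_i$ be a genuine orthant of $\Tm$; the length formula from unfolding the piecewise-linear path; (P2) from the ordering of the transition times; and necessity of (P3) via the Cauchy--Schwarz refinement computation. That part is correct and well explained. The one soft spot is your converse step: ``no such local refinement shortens the path, and together with CAT(0) uniqueness this characterises the geodesic'' does not follow as stated, since absence of improving \emph{refinements} does not by itself rule out a shorter path through a combinatorially unrelated orthant sequence. The actual sufficiency argument in \cite{OP} and \cite{MO} is more global (a min--max/duality argument over all admissible supports). You do explicitly defer existence and the combinatorial certification to those references, which is exactly what the paper does, so in the end your proposal is compatible with --- and strictly more informative than --- the paper's own handling of this lemma.
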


Note that \eqref{eqn1a} implies that $\O_0=\O(E^*)$ is the orthant in which $T^*$ lies and that $T$ is in $\O_k$. These results, developed from \cite{KV}, are given in this form, though not in a single lemma, in \cite{OP} section 2.3, where the properties (P1), (P2) and (P3) are stated in identical terms. The edge set for $\mathcal{O}_i$ is denoted by $\mathcal{E}^i$ in the statement of Theorem 2.4 there and the formula for the length of the geodesic is equation (1) in that statement.

Following \cite{KV} and \cite{OP} respectively, we call the orthant sequence $\C$ the \emph{carrier} of the geodesic, and the pair of partitions $({\cal A}, {\cal B})$ the \emph{support} of the geodesic. In general, the integer $k$ and the support $({\cal A}, {\cal B})$ need not be unique. However, they are unique if all the inequalities in (P2) are strict \cite[Remark, p.7]{OP} and, in this case, we shall refer to the carrier and support as the \textit{minimal} ones.   

Under the above assumption, the integer $k$ appearing in Lemma \ref{lem0} is the number of times that the geodesic includes a segment in the interior of one orthant followed by a segment in the interior of a neighbouring orthant. Hence, the constraints $1\leqslant k\leqslant m$: $k=1$ implies that the geodesic goes through the cone point and $k=m$ that it passes through a sequence of top-dimensional orthants.

We can now give an isometric embedding $\tilde{\C}$ in $\mathbb{R}^m$ of $\C\subseteq\mathbb{R}^M$ with $T^*$ mapped to $\u^*=(u^*_1,\cdots,u^*_m)$ in the positive orthant, where the $u^*_i>0$ represent the lengths of the edges of $T^*$, and with $T$ mapped to $\u=-(u_1,\cdots,u_m)$ in the negative orthant, where the $u_i>0$ are the lengths of the edges of $T$.  Let $(t^*_1,\cdots,t^*_m)$ be the coordinates of $T^*$ ordered by the canonical ordering given just before Lemma~\ref{lem0} that embeds $\Tm$ in $\mathbb{R}^M$.  Then we can reorder the coordinates $u^*_i$ such that the edges in $A_1$ correspond to the first $| A_1 |$ positive semi-axes in $\mathbb{R}^m$, the edges in $A_2$ correspond to the next $|A_2|$ positive semi-axes in $\mathbb{R}^m$, etc, while the edges in $B_1$ correspond to the first $|B_1|$ negative semi-axes in $\mathbb{R}^m$, the edges in $B_2$ correspond to the next $|B_2|$ negative semi-axes in $\mathbb{R}^m$, etc.  By (P1), the edge sets $B_1,\cdots, B_i, A_{i+1},\cdots, A_k$ are mutually compatible for all $0 \leqslant i \leqslant k$, implying that the images of these edges in $\mathbb{R}^m$ are mutually orthogonal, and so they determine an isometric embedding of $\O_i$, defined by \eqref{eqn1a}, and hence the required isometric embedding $\tilde\C$ of $\C$.  Let $\pi$ be the inverse of the permutation of the coordinates described above, so that
\begin{eqnarray}
\pi:\u^*=(u^*_1,\cdots,u^*_m)\mapsto\t^*=(t^*_1,\cdots,t^*_m).
\label{eqn1b}
\end{eqnarray}  

\begin{ex} \label{ex:logmap}
Figure~\ref{fig:logmap_1} shows the embedded geodesic and minimal carrier between the trees $T^*$ and $T$ (see Figures~\ref{fig:logmap_Tstar} and~\ref{fig:logmap_T}), which correspond to the points $\u^*$ and $\u$, respectively.  The minimal support consists of $A_1 = \{u_1^*,u_2^*\}$, $A_2 = \{u_3^*\}$, $B_1 = \{-u_2\}$, and $B_2 = \{-u_1, -u_3\}$.  For convenience, $\pi$ is the identity permutation in this case.  The minimal carrier consists of the all positive octant determined by $x_1>0$, $x_2>0$ and $x_3>0$; the 2-dimensional quadrant formed by the positive $x_3$ and negative $x_2$ axes; and the all negative octant.
\end{ex}

\begin{figure}
	\begin{subfigure}{.5\textwidth}
		\centering
		\includegraphics[width=1in]{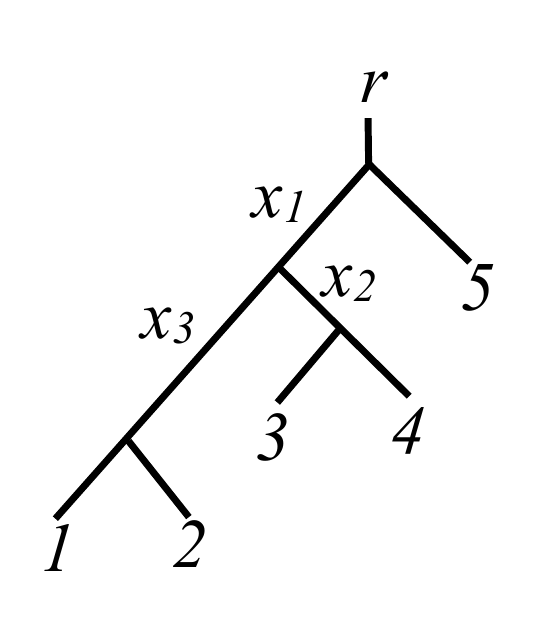}
		\caption{Tree $T^*$}
		\label{fig:logmap_Tstar}
	\end{subfigure}
	\begin{subfigure}{.5\textwidth}
		\centering
		\includegraphics[width=1in]{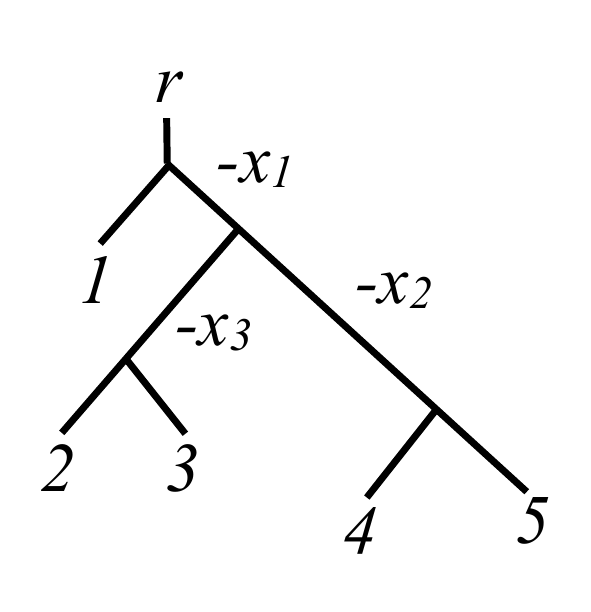}
		\caption{Tree $T$}
		\label{fig:logmap_T}
	\end{subfigure}
	\begin{subfigure}{1\textwidth}
		\centering
		\includegraphics[width=3in]{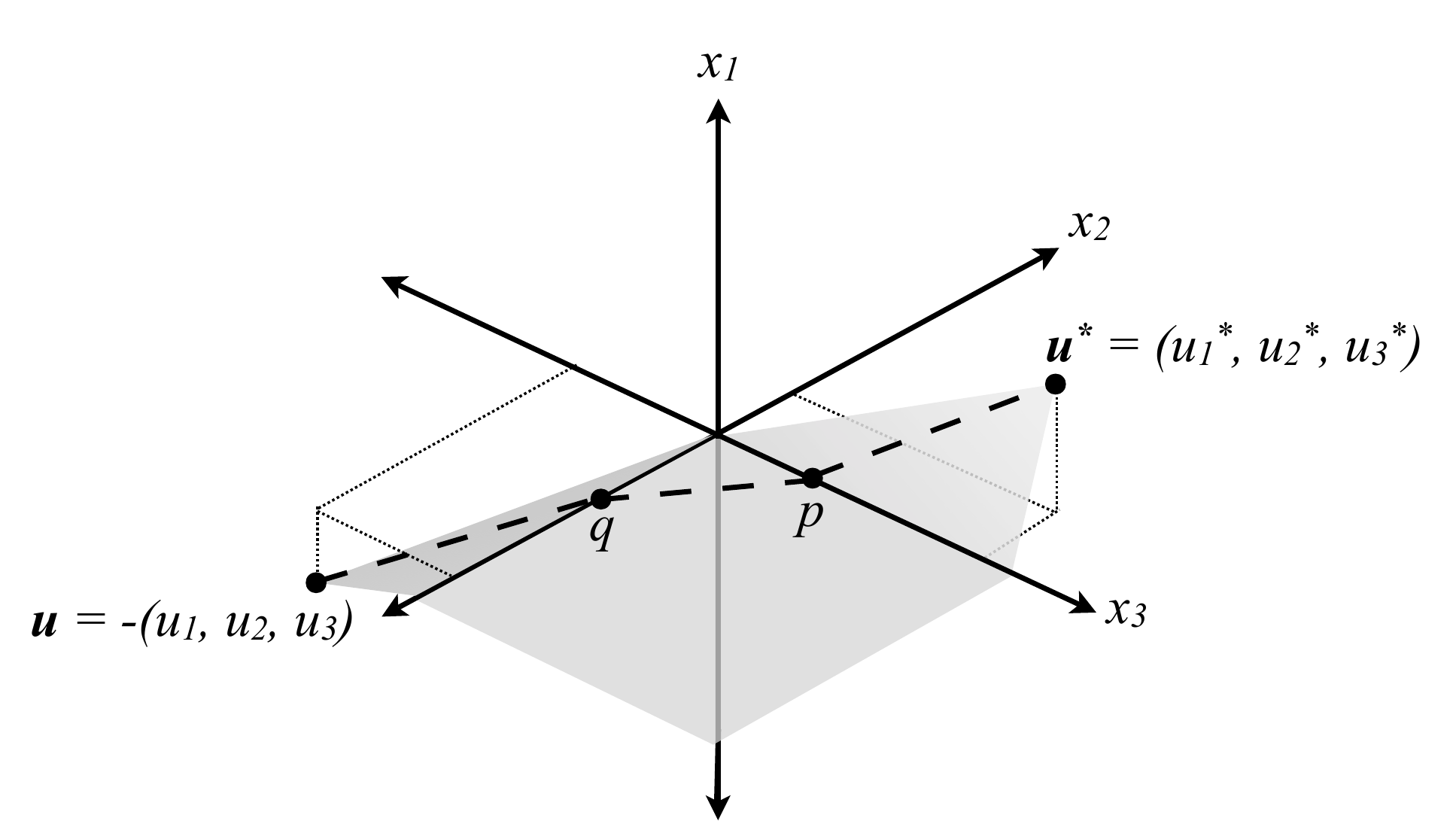}
		\caption{The geodesic between the trees corresponding to $\u^*$ and $\u$ is marked with the dashed line.  The $-x_1$,$-x_2$,$x_3$ octant does not exist in tree space, but the $-x_2$, $x_3$ quadrant does, so the geodesic is restricted to lying in the grey area. It bends at the points $p$ and $q$.}
		\label{fig:logmap_1}
	\end{subfigure}
	\begin{subfigure}{1\textwidth}
		\centering
		\includegraphics[width=3in]{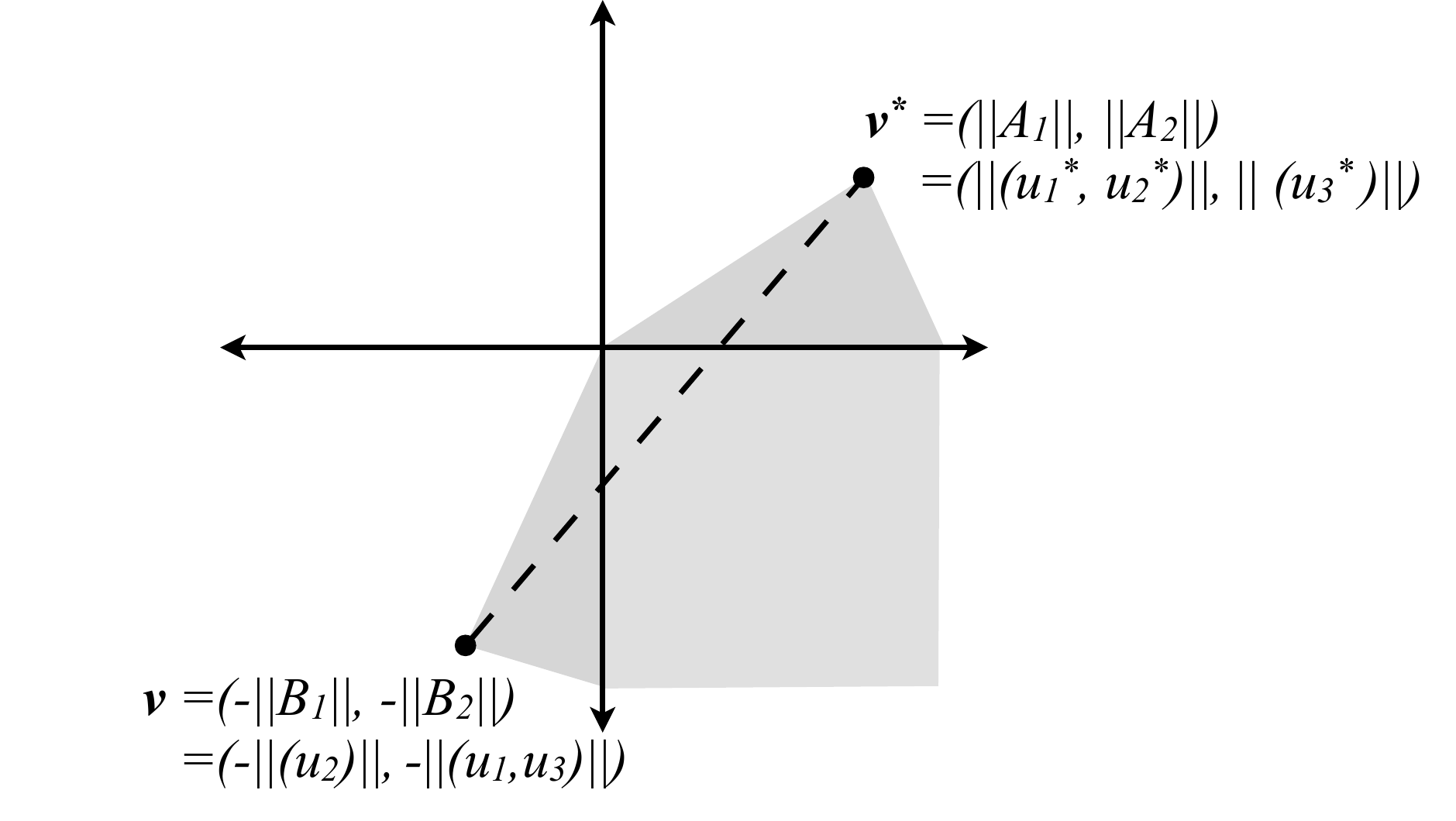}
		\caption{The isometric embedding of the grey area in (c) into $V^2$.  Intuitively, this corresponds to ``unfolding" the bends.  $\u^*$ and $\u$ are mapped to $\v^*$ and $\v$.  The Euclidean geodesic between $\v^*$ and $\v$ in $V^2$ is contained in the grey area, and thus can be mapped back onto the geodesic in tree space.}
		\label{fig:logmap_2}
	\end{subfigure}
	\caption{Trees, carrier, and isometric embedding for Example 1.}
	\label{fig:logmap_ex1}
\end{figure}

For any $1\leqslant l\leqslant m$, let $V^l$ be the subspace of $\mathbb{R}^l$ that is the union of the (closed) orthants $\mathcal{P}_i$, $i=0,\cdots,l$, where 
\[\mathcal{P}_i=\{(x_1,\cdots,x_l)\in\mathbb{R}^l\mid x_j\leqslant0\hbox{ for }j\leqslant i\hbox{ and }x_j\geqslant0\hbox{ for }j>i\}.\] 
For the given $T^*$, $T$, and corresponding $k$ from Lemma \ref{lem0}, there are $k+1$ orthants in the carrier of the geodesic between $T^*$ and $T$. If $k = m$ (the intrinsic dimension of $\Tm$), then the carrier $\C$ is isometric to $\tilde{\C}=V^m$, with $\O_i$ coinciding with $\mathcal{P}_i$ and with the geodesic from $\u^*$ to $\u$ being a straight line contained in $\tilde{\C}$. Otherwise if $k < m$, the space $\tilde{\C}$ is strictly contained in $V^m$, and some of the top-dimensional orthants of $V^m$ may not correspond to orthants in tree space. Additionally, the geodesic between $\u^*$ and $\u$ in $\tilde{\C}$ will bend at certain orthant boundaries within the ambient space $V^m$. We now give an isometric embedding onto $V^k$ of a subspace of $\tilde{\mathcal C}$ containing the geodesic in $V^m$ such that the image geodesic is a straight line.

The geodesic between $\u^*$ and $\u$ passes through $k$ orthant boundaries. At the $i$-th orthant boundary, the edges in $A_i$, which have been shrinking in length since the geodesic started at $\u^*$, simultaneously reach length 0, and the edges in $B_i$ simultaneously appear in the tree with length 0 and start to grow in length. The length of each edge in $A_i$ changes linearly as we move along the geodesic, and thus since these lengths all reach 0 at the same point, the ratios of these lengths to each other remain the same along the geodesic.  An analogous statement can be made for the lengths of the edges in $B_i$ (cf. \cite{MO} Corollary 4.3).  The basic idea behind the embedding into $V^k$ is that because the lengths of the edges in $A_i$, for any $i$, are all linearly dependent on each other, we can represent those edges in $V^k$ using only one dimension, and analogously for the edges in $B_i$.  

More specifically, for $1 \leqslant i \leqslant k$, let 
\[\v_i^* = (u_{|A_1| + \cdots + |A_{i-1}| + 1}^*, \cdots, u_{|A_1| + \cdots + |A_{i-1}| + |A_i |}^*),\]
be the projection of $\u^*$ on the orthant $\O(A_i)$. That is, the coordinates of $\v_i^*$ are the lengths of the edges in $A_i$, ordered as chosen above. Similarly, let 
\[\v_i = (u_{|B_1| + \cdots + |B_{i-1}| + 1}, \cdots, u_{|B_1| + \cdots + |B_{i-1}| + |B_i |}),\]
so that the coordinates of $\v_i$ are the lengths of the edges in $B_i$, in that order. Then, the geodesic between $T^*$ and $T$ in $\C$ is piece-wise linearly isometric with the Euclidean geodesic between the vectors 
\[\v^*=(\|\v_1^*\|,\cdots,\|\v_k^*\|) = (\|A_1\|_{T^*},\cdots,\|A_k\|_{T^*}) \]
and
\[\v=(-\|\v_1\|,\cdots,-\|\v_k\|) = (-\|B_1\|_T,\cdots,-\|B_k\|_T)\]
in $V^k$, and hence in $\mathbb{R}^k$. In particular, the Euclidean distance between these two Euclidean points is the same as the distance between $T^*$ and $T$ in $\C$. Thus we have the following result, the essence of which appears in \cite{MO}, Theorem 4.10, to which we refer readers for more detailed proof.

\begin{lemma}
For any given $T^*$ and $T$ in $\Tm$ with no common edge and with $T^*$ lying in a top-dimensional stratum, there is an integer $k$, $1\leqslant k\leqslant m$, for which there are two vectors $\v^*,\v\in\mathbb{R}^k$, depending on both $T^*$ and $T$, such that the geodesic between $T^*$ and $T$ is homeomorphic and piece-wise linearly isometric, with the $($straight$)$ Euclidean geodesic between $\v^*$ and $\v$, where $\v^*$ lies in the positive orthant of $\mathbb{R}^k$ and $\v$ in the closure of the negative orthant.
\label{lem1}
\end{lemma}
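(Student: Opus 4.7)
The plan is to reduce the statement to a direct consequence of Lemma~\ref{lem0} together with the ``block-collapsing'' construction sketched in the paragraphs preceding the lemma. First, apply Lemma~\ref{lem0} to obtain the integer $k$, the partitions $\mathcal{A}=(A_1,\ldots,A_k)$ of $E^*$ and $\mathcal{B}=(B_1,\ldots,B_k)$ of $E$, and the carrier $\C=(\O_0,\ldots,\O_k)$ with $\O_i=\O(B_1\cup\cdots\cup B_i\cup A_{i+1}\cup\cdots\cup A_k)$. Define
\begin{eqnarray*}
\v^*=(\|A_1\|_{T^*},\ldots,\|A_k\|_{T^*}), \qquad \v=(-\|B_1\|_T,\ldots,-\|B_k\|_T)
\end{eqnarray*}
in $\mathbb{R}^k$, so that $\v^*$ lies in the positive orthant and $\v$ in the closure of the negative orthant, as required.

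Next, I would construct the piecewise linear isometry block by block. Fix the isometric embedding $\tilde{\C}\subset\mathbb{R}^m$ described before the lemma, in which $T^*$ maps to $\u^*$ and $T$ to $\u$. On each embedded orthant $\tilde{\O}_i$, define a linear map into $\mathcal{P}_i\subset V^k$ by collapsing each $A_l$-block of coordinates ($l>i$) into its Euclidean norm along the $l$-th positive semi-axis of $\mathbb{R}^k$, and each $B_j$-block ($j\leqslant i$) into the negative of its norm along the $j$-th negative semi-axis. By property (P1) these blocks span mutually orthogonal coordinate subspaces of $\tilde{\O}_i$, and by \cite{MO} Corollary~4.3 the ratios of lengths within each $A_l$ and each $B_j$ stay constant along the geodesic. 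Consequently the block-collapsing maps agree on the shared boundary faces $\tilde{\O}_i\cap\tilde{\O}_{i+1}$ (where $\|A_{i+1}\|$ or $\|B_{i+1}\|$ vanishes intrinsically) and hence glue into a well-defined piecewise linear map from a polyhedral neighbourhood of the geodesic in $\tilde{\C}$ onto its image in $V^k$. Moreover, on the $k$-dimensional affine slice of $\tilde{\C}$ spanned by the proportional-length directions, the collapsing map is a Euclidean isometry onto its image — this is the key claim, and is an immediate computation using the orthogonality given by (P1).

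Finally, I would verify that the Euclidean segment from $\v^*$ to $\v$ lies in $V^k$ and has the correct length. Parametrising the segment linearly by $t\in[0,1]$, its $i$-th coordinate vanishes at $t_i=\|A_i\|_{T^*}/(\|A_i\|_{T^*}+\|B_i\|_T)$; property (P2) is precisely the inequality $t_1\leqslant\cdots\leqslant t_k$, so the coordinates change sign in the order $1,\ldots,k$ and the segment traverses $\mathcal{P}_0,\ldots,\mathcal{P}_k$ in order without leaving $V^k$. Its total Euclidean length,
\begin{eqnarray*}
\sqrt{\sum_{i=1}^k(\|A_i\|_{T^*}+\|B_i\|_T)^2},
\end{eqnarray*}
matches the geodesic length given in Lemma~\ref{lem0}, so pulling the segment back by the inverse of the block-collapsing map recovers the geodesic in $\C$ as a piecewise linear isometric image. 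The main obstacle I expect is verifying that the block-collapsing map is a genuine isometry on the full affine slice on which it acts and not only at individual points of the geodesic; this reduces to combining the orthogonality guaranteed by (P1) with the constant-ratio property of \cite{MO} Corollary~4.3, both of which are available as soon as the support has been chosen via Lemma~\ref{lem0}.
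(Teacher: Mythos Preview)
Your proposal is correct and follows the same construction the paper sketches in the paragraphs immediately preceding the lemma: use Lemma~\ref{lem0} to obtain the support $(\mathcal A,\mathcal B)$, define $\v^*=(\|A_1\|_{T^*},\ldots,\|A_k\|_{T^*})$ and $\v=(-\|B_1\|_T,\ldots,-\|B_k\|_T)$, and invoke the constant-ratio property from \cite{MO} Corollary~4.3 together with the orthogonality from (P1) to see that collapsing each block to its norm is isometric along the geodesic. The paper itself does not give a formal proof but refers to \cite{MO} Theorem~4.10; your explicit checks that (P2) forces the straight segment to remain in $V^k$ and that the Euclidean length matches the formula in Lemma~\ref{lem0} are exactly the missing details one would fill in from that reference.
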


For Example~\ref{ex:logmap}, $k = 2$, and thus the grey area shown in Figure~\ref{fig:logmap_1} is isometrically mapped to $V^2$, as shown in Figure~\ref{fig:logmap_2}.

\vskip 6pt
In the general case where $T^*$ and $T$ have a common edge, say $e$, this common edge determines, for each of the two trees, two quotient trees $T^*_i$ and $T_i$, $i=1,2$, described as follows (cf. \cite{MO} \& \cite{KV}). The trees $T_1^*$ and $T_1$ are obtained by replacing the subtree `below' $e$ with a single new leaf, so that $e$ becomes an external edge. These two replaced subtrees form the trees $T_2^*$ and $T_2$, with the `upper' vertex of the edge $e$ becoming the new root. Then, the geodesic $\gamma(t)$ between $T^*$ and $T$ is isometric with $(\gamma_e(t),\gamma_1(t),\gamma_2(t))$, where $\gamma_e$ is the linear path from $|e|_{T^*}$ to $|e|_T$ and $\gamma_i$ is the geodesic from $T^*_i$ to $T_i$ in the corresponding tree space. For this, we treat $\T_{\!1}$ and $\T_{\!2}$, the spaces of trees with no internal edges, as single points, so that any geodesic in them is a constant path. Assuming that $T^*_i$ and $T_i$ have no common edge for $i=1$ or 2, we may obtain, as above, a straightened image of each geodesic $\gamma_i$ in $V^{k_i}$ with $T^*_i$ represented in the positive orthant and $T_i$ in the negative one. Combining these with the geodesic $\gamma_e$, which is already a straight linear segment, we have an isometric representation of $\gamma$ as a straight linear segment in $\mathbb{R}_+\times V^{k_1} \times V^{k_2}$. In this case, the sequence of strata containing the tree space geodesic between $T^*$ and $T$ is contained in the product of the carriers for the relevant quotient trees, together with an additional factor for the common edge. For example, if $0< t_1 < t_2 < t_3 < t_4 < 1$ and the geodesic $\gamma_1$ spends $[0,t_2]$ in orthant $\O_1$, $[t_2,t_3]$ in orthant $\O_2$, $[t_3,1]$ in orthant $\O_3$, while the geodesic $\gamma_2$ spends $[0,t_1]$ in orthant $\P_1$, $[t_1,t_4]$ in orthant $\P_2$, $[t_4,1]$ in orthant $\P_3$, then the carrier for the product geodesic would be the sub-sequence 
\[\O_1\times\P_1,\O_1\times\P_2,\O_2\times\P_2,\O_3\times\P_2,  \O_3\times\P_3\]
of the full lexicographically ordered sequence of nine products. 

If $T^*$ and $T$ have more than one common edge, then either $T^*_1$ and $T_1$, or $T^*_2$ and $T_2$, will have a common edge and we may repeat the process. Having done so as often as necessary, we arrive at a sequence of orthants determined by the non-common edges of $T^*$ and $T$. These we relabel $\O_0$ to $\O_k$ as in Lemma \ref{lem0}. If $\O_{-1}$ is the orthant determined by the axes corresponding to the common edges of $T^*$ and $T$, then the sequence
\[\O_{-1}\times\O_0,\,\O_{-1}\times\O_1,\,\cdots,\,\O_{-1}\times\O_k\]
is the carrier of the geodesic from $T^*$ to $T$. Similarly, the support for the tree space geodesic between $T^*$ and $T$ is found by interleaving the partitions in the supports of the relevant quotient trees so that property (P2) is satisfied in the combined support. The resulting partitions $\mathcal{A}$ and $\mathcal{B}$ are then preceded by the set $A_0=B_0$ of axes corresponding to the common edges so that $\O_{-1}=\O(A_0)=\O(B_0)$, with the convention that the corresponding ratio is $-\|A_0\|_{T^*}/\|B_0\|_T$, and \eqref{eqn1a} is modified to
\[\O_i=\O(B_0\cup B_1\cup\cdots\cup B_i\cup A_{i+1}\cup\cdots\cup A_k).\]
In this generalised context, the value $k=0$ is now possible, implying that all edges are common to $T^*$ and $T$. In other words, they lie in the same orthant. Note that this presentation differs slightly from that in Section 1.2 of \cite{MOP} in that, by collecting all the common edges in a single member $A_0=B_0$ of the support, we are implicitly suppressing the axiom (P3) for that set. Note that the maximum value of the number $k$, which is determined by the non-common edges of $T^*$ and $T$, is $m-|A_0|$ in the general case.

\begin{definition}
We call $k$, the number of changes of orthant in the unique minimal carrier of the geodesic from $T^*$ to $T$, the carrier number $k(T^*,T)$ of $T^*$ and $T$. 
\label{def0}
\end{definition}

Clearly, $k(T^*,T)=k(T,T^*)$.

\vskip 6pt
The minimal carrier and support determine the corresponding $\u^*$, $\v^*_i$, $\v^*$ and $\v$ in a similar manner to the special case where there is no common edge between $T^*$ and $T$ given in Lemma \ref{lem1}, modified to account for the common edges. For this, the first $|A_0|$ coordinates of $\u^*$ will be the $(A_0)_{T^*}$ and those of $\u$ will be $+(B_0)_T$; for $k=k(T^*,T)$ and $1\leqslant i\leqslant k$,
\begin{eqnarray}
\v^*_i=\left(u_{|A_0|+|A_1| + \cdots + |A_{i-1}| + 1}^*, \cdots, u_{|A_0|+|A_1| + \cdots + |A_{i-1}| + |A_i |}^*\right)
\label{eqn2b}
\end{eqnarray}
and $\v_i$ is modified similarly; and $\v^*$ and $\v$ have additional first coordinates $\v_0^*=(A_0)_{T^*}$ and $\v_0=(B_0)_T$ respectively. Then, with this modification, the geodesic between $T^*$ and $T$ is homeomorphic and piece-wise linearly isometric, with the (straight) Euclidean geodesic between $\v^*$ and $\v$, where $\v^*$ lies in the positive orthant of $\mathbb{R}^{|A_0|}_+\times\mathbb{R}^k$. This generalisation of Lemma \ref{lem1} to the general case was obtained, with different notation, in \cite{BHV}, \cite{MO} and \cite{KV}. Then, the log map as defined by \eqref{eqn1} can be expressed using these vectors as follows.
  
\begin{theorem}
Fix $T^*$ in a top-dimensional stratum of $\Tm$ with coordinates $\t^*=(t^*_1,\cdots,t^*_m)$, where the ordering of the coordinates is that induced by the canonical ordering for $\mathbb{R}^M$. 
For $T\in\Tm$, there are vectors $\v^*$ and $\v$ in $\mathbb{R}^{|A_0|}_+\times\mathbb{R}^k$, where $|A_0|$ is the number of common edges of $T^*$ and $T$, $k$ is the carrier number $k(T^*,T)$ and $\v^*$ lies in the positive orthant of the corresponding space, and a linear map $\rho$ such that 
\begin{eqnarray}
\log_{T^*}(T)=\rho(\v-\v^*)=\rho(\v)-\t^*.
\label{eqn2c}
\end{eqnarray}
\label{thm1}
\end{theorem}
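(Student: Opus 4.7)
The plan is to reduce the statement to the isometric straightening given by (the generalised version of) Lemma \ref{lem1}, and then to identify $\rho$ as the linear extension of the isometry between the initial orthant of the straightened carrier and the tree-space orthant $\O_0$ containing $T^*$.

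First I would invoke the generalised Lemma \ref{lem1}: the geodesic from $T^*$ to $T$ is piecewise linearly isometric to the straight Euclidean segment joining $\v^*$ to $\v$ in $\mathbb{R}^{|A_0|}_+\times V^k$, whence $d(T^*,T)=\|\v-\v^*\|$, and the unit tangent of the straightened geodesic at $\v^*$ equals $(\v-\v^*)/\|\v-\v^*\|$. The initial segment of the tree-space geodesic lies in $\O_0$, which corresponds isometrically to the initial orthant $\mathbb{R}^{|A_0|}_+\times\mathcal P_0$ of the straightened picture, so the initial unit tangent $\v(T)\in\mathbb{R}^m$ is the pull-back of $(\v-\v^*)/\|\v-\v^*\|$ through this correspondence.

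Next I would make this correspondence explicit in a form that is manifestly $\mathbb{R}$-linear. The first $|A_0|$ coordinates of the straightened picture are identified with the common-edge coordinates of $T^*$ via the permutation $\pi$. For each $1\leqslant i\leqslant k$, the $i$-th coordinate of the $V^k$ factor is identified with the one-parameter family in $\O(A_i)$ of scalar multiples of $\v^*_i/\|\v^*_i\|$, which is legitimate because, along the initial geodesic segment in $\O_0$, the edges of $A_i$ all change in a fixed ratio determined by their lengths at $T^*$ (the proportional-shrinking property recalled just before Lemma \ref{lem1}). I define $\rho:\mathbb{R}^{|A_0|}\times\mathbb{R}^k\to\mathbb{R}^m$ as the linear extension of this identification: it sends the $j$-th common-edge basis vector to its canonical slot in $\mathbb{R}^m$ (via $\pi$), and the $i$-th $V^k$ basis vector to $\v^*_i/\|\v^*_i\|$ placed in the $A_i$-slots of $\mathbb{R}^m$ (again via $\pi$).

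Finally I would verify both claimed equalities. By construction $\rho(\v^*)=\t^*$, since the first $|A_0|$ coordinates of $\v^*$ are the common-edge lengths of $T^*$, and for each $i\geqslant 1$ the scalar $\|\v^*_i\|$ multiplied by $\v^*_i/\|\v^*_i\|$ recovers the $A_i$-coordinates of $T^*$. Therefore
\[
\log_{T^*}(T)=d(T^*,T)\,\v(T)=\|\v-\v^*\|\cdot\rho\!\left(\frac{\v-\v^*}{\|\v-\v^*\|}\right)=\rho(\v-\v^*)=\rho(\v)-\t^*,
\]
which is \eqref{eqn2c}. The principal subtlety is that the underlying isometry is a priori defined only on the closed positive orthant, whereas $(\v-\v^*)$ has negative components in the $V^k$ directions; what makes the linear extension unambiguous and geometrically correct is precisely the ratio-preserving behaviour of the $A_i$-edges, which guarantees that the pulled-back tangent direction at $T^*$ is indeed $\rho$ applied to the straightened tangent.
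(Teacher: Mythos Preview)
Your proof is correct and follows essentially the same approach as the paper: your linear map $\rho$ is exactly the composition $\pi\circ\chi$ that the paper constructs, where $\chi$ sends the $i$-th basis vector of the $V^k$ factor to $\v^*_i/\|\v^*_i\|$ and is the identity on the common-edge coordinates, and $\pi$ is the permutation back to canonical order. Your explicit acknowledgement of the subtlety---that the isometry is only defined on the positive orthant and must be linearly extended to accommodate the negative components of $\v-\v^*$---matches the paper's implicit reliance on the linearity of $\chi$ in \eqref{eqn2}.
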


\begin{proof}
Let $\v^*$ have $(|A_0|+i)$th coordinate $\|\v^*_i\|$, $i=1,\cdots,k$, with the additional initial coordinates $\v^*_0$ when $T^*$ and $T$ have common edges, where $\v_i^*$ are as defined by \eqref{eqn2b}, and $\v$ be determined similarly. The piece-wise linear isometry that straightens the geodesic from $T^*$ to $T$, given in Lemma \ref{lem1} for the special case as well as the above for the general case, has an inverse on the positive orthant in $\mathbb{R}^{|A_0|}_+\times\mathbb{R}^k$. This inverse is given by
\begin{eqnarray}
\chi:\,\e_i\mapsto\frac{1}{\|\v^*_i\|}\v^*_i\qquad1\leqslant i\leqslant k,
\label{eqn2}
\end{eqnarray}
and the identity on the $|A_0|$ initial coordinates, where $\e_i$ is the $(|A_0|+i)$th standard basis vector in $\mathbb{R}^{|A_0|}_+\times\mathbb{R}^k$. Note that, being a linear map, when $A_0=\emptyset$, $\chi((x_1,\cdots,x_k))=\displaystyle\sum_{i=1}^kx_i\frac{1}{\|\v^*_i\|}\v^*_i$, where $(x_1,\cdots,x_k)=\displaystyle\sum_{i=1}^kx_i\e_i\in\mathbb{R}^k$. Although it is not expressed precisely as it is here, the idea for a more detailed derivation of this in this case is captured in Theorem~4.4 in \cite{MO}, where $\chi$ is denoted by $g_0$.

Since $\v^*=\v^*_0+\sum\limits_{i=1}^k\|\v^*_i\|\,\e_i$, we have that $\chi(\v^*)=\u^*$ and that $\chi$ maps the initial segment of the straight geodesic in $\mathbb{R}^{|A_0|}_+\times\mathbb{R}^k$, together with its initial tangent vector $\v-\v^*$, onto those of the geodesic in $V^m$. The permutation $\pi$, which maps the positive orthant in $V^m$ into $\T_m\subset\mathbb{R}^M$ where $M$ is defined by \eqref{eqn1c}, is also an isometry preserving the initial segments of the geodesics. It follows that 
\begin{eqnarray}
\log_{T^*}(T)=\pi\circ\chi(\v-\v^*).
\label{eqn2a}
\end{eqnarray}
Noting that the maps $\pi$ and $\chi$ are linear and $\pi\circ\chi(\v^*)=t^*$, the required result follows by taking $\rho=\pi\circ\chi$.  
\end{proof}

Figure~\ref{fig:logmap_3} shows the log map for the tree $T^*$ for Example~\ref{ex:logmap}.
\begin{figure}
		\centering
		\includegraphics[width=3in]{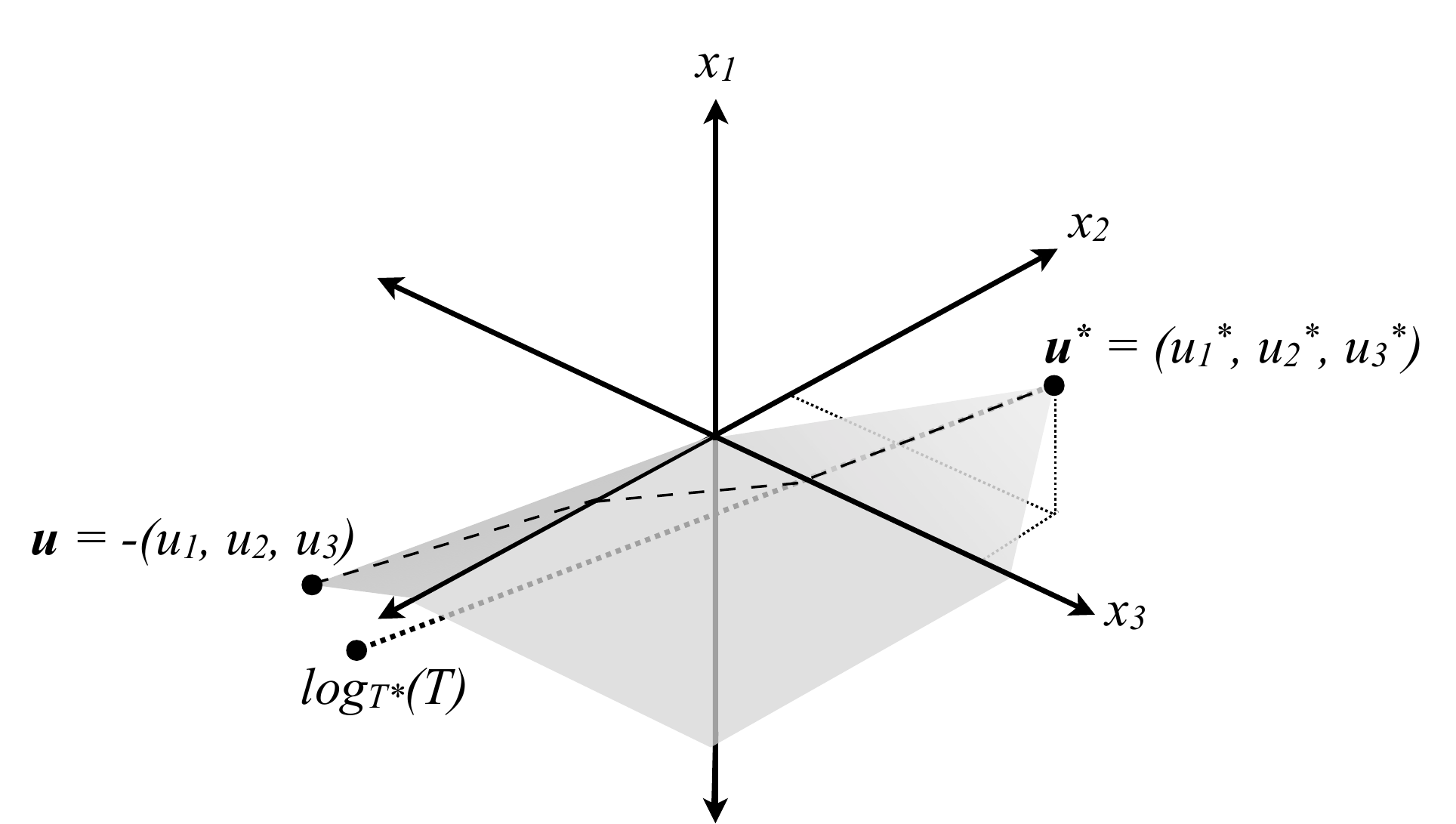}
		\caption{The log map for tree $T^*$ in Example~\ref{ex:logmap}.
The vector between $\u^*$ and $log_{T^*}(T)$ is shown as a dashed line. It coincides with the geodesic between $T^*$ and $T$ in the starting orthant, but then continues into the ambient space, while the geodesic must bend to remain in the tree space.}
		\label{fig:logmap_3}
\end{figure}

\vskip 6pt
Although $\Tm$ is $CAT(0)$, $\log_{T^*}$ is not a one-to-one map. In particular, if $T_1$ and $T_2$ are two different trees such that $k(T^*,T_1)=k(T^*,T_2)$ is not maximal, then it is possible that $\log_{T^*}(T_1)=\log_{T^*}(T_2)$, as observed in the case of $\TT$ in \cite{BLO}. As another example, consider two trees $T^*, T \in \Tm$ with no common edges such that the geodesic between them passes through the cone point with a given length $l$.  Then for any other tree $T'$ with a geodesic to $T^*$ of length $l$, passing through the cone point, we also have that $\log_{T^*}(T)=\log_{T^*}(T')$.

Recalling that each component of $\v^*_i$ and $\v_i$ is respectively the length of an edge in $A_i$ and $B_i$ then, with some ambiguity in the ordering of the edges of $T^*$, another equivalent way to express $\log_{T^*}$ is
\[\log_{T^*}(T)=\{\bar B_0-\bar A_0\}-\sum_{j=1}^k\frac{\|B_j\|_T+\|A_j\|_{T^*}}{\|A_j\|_{T^*}}\bar A_j\]
where $\bar A_j=(e_{T^*})_{e\in A_j}$. To derive the limiting distribution of sample Fr\'echet means, the ordering must be kept explicit and independent of $T$.  Hence, we have to use the expression for the log map given by \eqref{eqn2a}, even though it is not as transparent as this one.

\vskip 6pt
Note also that, although the definitions for both $\pi$ and  $\chi$ implicitly depend on the ordering we chose for the coordinates of $\u^*$, the composition $\pi\circ\chi$ is independent of that choice, and so the log map is well-defined, as long as we chose the same ordering for $\u^*$ for both $\pi$ and $\chi$. 

\vskip 6pt
The minimal carrier that determines the maps $\pi$ and $\chi$ as well as the vectors $\v^*$ and $\v$ depends on both $T^*$ and $T$, although we have suppressed that dependence in the notation. However, there are only finitely many choices for the carrier number $k(T^*,T)$ and the minimal support when $T^*$ is fixed and $T$ varies within a given stratum of $\Tm$. In particular, if $k(T^*,T)$ remains constant in a neighbourhood of $(T^*,T)$, then $\pi$ and $\chi$ do not change for small enough changes in $T^*$ and $T$. It follows that there are only finitely many possibilities for the form \eqref{eqn2c} that $\pi\circ\chi$ takes when $T$ varies in $\Tm$. Here, by form, we mean the algebraic expression of $\log_{T^*}$ as a map. That is, by `$\log_{T^*}(T_1)$ and $\log_{T^*}(T_2)$ taking the same form', we mean that they can be obtained using a single algebraic expression for $\log_{T^*}$. Since the permutation $\pi$ returns all the axes to their canonical order, this expression is determined by the partition $\mathcal{A}$ of the edges of $T^*$, with the subsets of non-common edges possibly permuted. For example, in the case of $\TT$, $\log_{T^*}$ only takes two possible forms, depending on whether the geodesic from $T^*$ to $T$ passes through the cone point or not where the cone point, the origin in $\mathbb{R}^M$, represents the tree whose two edges have zero length. The two corresponding subsets of $\TT$ are respectively indicated by the unions of light and dark grey regions in Figure 3 of \cite{BLO} when $T^*$ is the tree corresponding to $(x_i,x_j)$. The different possibilities for the form \eqref{eqn2c} give rise to a \textit{polyhedral subdivision} of tree space $\Tm$, defined as follows.

\begin{definition}
For a fixed $T^*$ lying in a top-dimensional stratum of $\Tm$, the polyhedral subdivision of tree space $\Tm$, with respect to $T^*$, is determined by the possible forms that $\log_{T^*}$ can take: each polyhedron of the subdivision is the closure of the set of trees $T$ that have a particular form for $\log_{T^*}(T)$. We shall call each such top-dimensional polyhedron a maximal cell of the polyhedral subdivision and let $\mathcal{D}_{T^*}$ be the subset of $\Tm$ consisting of all trees that lie on the boundaries of maximal cells determined by the polyhedral subdivision with respect to $T^*$.
\label{def1}
\end{definition}

Note that, if the geodesics to $T_1$ and $T_2$ from $T^*$ pass through the same sequence of strata, then $\log_{T^*}(T_1)$ and $\log_{T^*}(T_2)$ take the same form. However, the converse is not always true. For example, it is possible that $T_1$ and $T_2$ lie in different strata, but in the same maximal cell. Hence, the definition of polyhedral subdivision of $\Tm$ defined here is similar to, but coarser than, the concept of `vistal polyhedral subdivision' given in section 3 of \cite{MOP}. This is due to the fact that, while $\mathcal A$ and $\mathcal B$ in the minimal support play a symmetric role for the geodesic between $T^*$ and $T$, their roles in the log map $\log_{T^*}$ are asymmetric. When $T$ varies, as long as the corresponding partition $\mathcal A$ either is unchanged or, at most, its subsets corresponding to the non-common edges are permuted, the algebraic expression for $\log_{T^*}$ remains the same.

This polyhedral subdivision varies continuously with respect to $T^*$. If $T$ lies in the interior of a maximal cell of the subdivision and $T^*$, itself in a top-dimensional (open) stratum, varies in a small enough neighbourhood, then the support for $T^*$ and $T$ is unique. Then, the derivative of the log map will be well-defined.

When $T$ lies on the boundary of a maximal cell of the subdivision, but not on a stratum boundary, the possible supports for $T^*$ and $T$ are those determined by the polyhedra to which that boundary belongs. However, all of these supports give rise to the same geodesic between $T^*$ and $T$, as they must since $\Tm$ is a $CAT(0)$-space, and among them will be the minimal support that we are assuming for our analysis. Moreover, in this case, there is at least one non-minimal support for $T^*$ and $T$ with the property that, for the corresponding $\v^*$ and $\v$, $\|\v^*_i\|/\|\v_i\|=\|\v^*_{i+1}\|/\|\v_{i+1}\|$ for some $i\geqslant1$. 

\vskip 6pt
Recall that from Definition \ref{def0} that the carrier number counts the number of orthants that the geodesic from $T^*$ to $T$ meets in a linear segment of positive length. It will become clear later that the set of trees $T$ for which, for a given $T^*$, the carrier number $k(T^*,T)$ is less than its possible maximum $m-|A_0|$, where $|A_0|$ is the number of common edges of $T^*$ and $T$, plays a role that distinguishes the limiting distributions of sample Fr\'echet means in the tree spaces from those in Euclidean space. Hence, we introduce the following definition.

\begin{definition}
A point $T\in\Tm$ is called singular, with respect to a tree $T^*$ lying in a top-dimensional stratum, if the carrier number $k(T^*,T)$ of $T^*$ and $T$ is less than $m-|A_0|$. The set of such singular points will be denoted by $\mathcal{S}^{\phantom{A}}_{T^*}$. 
\label{def2}
\end{definition}

The following result describes the image, under $\log_{T^*}$, in the tangent space at $T^*$ of the set $\mathcal{S}^{\phantom{A}}_{T^*}$: although $\mathcal{S}^{\phantom{A}}_{T^*}$ may be rather complex, its image is relatively simple.

\begin{corollary}
If $T^*\in\Tm$ lies in a top-dimensional stratum, then the image, under $\log_{T^*}$, of the set $\mathcal{S}^{\phantom{A}}_{T^*}$ of the singular points with respect to $T^*$ is contained in the union of the hyperplanes $x_it^*_j=x_jt^*_i$, $1\leqslant i\not=j\leqslant m$, in $\mathbb{R}^m$.
\label{cor1}
\end{corollary}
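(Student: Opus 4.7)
The plan is to read off the coordinates of $\log_{T^*}(T)$ directly from the formula \eqref{eqn2c} and observe that, within a single block $A_i$ of the minimal support, these coordinates are forced to be proportional to the corresponding coordinates of $T^*$. The hyperplane condition $x_it^*_j=x_jt^*_i$ is then immediate.

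First, I would translate the singularity condition into a combinatorial statement about the support. The total number of non-common edges of $T^*$ (equivalently, of $T$) is $m-|A_0|$, and these are partitioned into the $k$ non-empty blocks $A_1,\ldots,A_k$. Hence, $k(T^*,T)<m-|A_0|$ if and only if there exists some index $i\in\{1,\ldots,k\}$ with $|A_i|\geqslant 2$. Fix such an $i$ and pick two distinct edges $e,e'\in A_i$, and let $j,l\in\{1,\ldots,m\}$ be their positions in the canonical ordering, so that $t^*_j=|e|_{T^*}$ and $t^*_l=|e'|_{T^*}$.

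Next, I would use the definition of $\chi$ in \eqref{eqn2} to compute the $j$-th and $l$-th coordinates of $\log_{T^*}(T)=\pi\circ\chi(\v-\v^*)$. Because $\v$ and $\v^*$ agree on the first $|A_0|$ coordinates only up to a translation and differ in the $(|A_0|+i)$-th coordinate by $-(\|\v^*_i\|+\|\v_i\|)$, the contribution of that coordinate to $\chi(\v-\v^*)$ is the vector $-(\|\v^*_i\|+\|\v_i\|)\,\|\v^*_i\|^{-1}\v^*_i$. Since $\v^*_i$ lists, in the reordered positions, the lengths of the edges in $A_i$, the permutation $\pi$ sends its $j$- and $l$-entries back to canonical positions, giving
\[
(\log_{T^*}(T))_j=-\frac{\|\v^*_i\|+\|\v_i\|}{\|\v^*_i\|}\,t^*_j,\qquad(\log_{T^*}(T))_l=-\frac{\|\v^*_i\|+\|\v_i\|}{\|\v^*_i\|}\,t^*_l.
\]
Note that the contributions from the $A_{i'}$-blocks for $i'\neq i$, and from the $A_0$-block, vanish at positions $j$ and $l$ by disjointness of supports after applying $\pi$, so these two formulas are the full coordinates.

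Finally, eliminating the common proportionality factor yields $(\log_{T^*}(T))_j\,t^*_l=(\log_{T^*}(T))_l\,t^*_j$, so writing $\log_{T^*}(T)=(x_1,\ldots,x_m)$ we have $x_jt^*_l=x_lt^*_j$ for this particular pair $j\neq l$. As $T$ ranges over $\mathcal{S}^{\phantom{A}}_{T^*}$, the corresponding pair $(j,l)$ lies in the finite set $\{(i',j')\,:\,1\leqslant i'\neq j'\leqslant m\}$, so the image is contained in the stated union of hyperplanes. There is no real obstacle here beyond carefully tracking the permutation $\pi$ and the block structure of $\chi$; the content of the corollary is essentially a direct consequence of the formula in Theorem \ref{thm1} combined with the observation that shrinking a block of edges simultaneously (which is what $|A_i|\geqslant 2$ means geodesically) forces their log-map coordinates to remain proportional to their lengths in $T^*$.
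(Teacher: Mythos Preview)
Your proof is correct and follows essentially the same approach as the paper's: both observe that $k(T^*,T)<m-|A_0|$ forces some $|A_i|\geqslant2$, and then use the block structure of $\chi$ from \eqref{eqn2} to see that the image coordinates corresponding to edges in $A_i$ are all a common scalar multiple of the corresponding coordinates of $T^*$. Your version is simply more explicit in writing out the coordinate formulas, whereas the paper phrases the same computation as ``$\chi$ maps the line determined by $\e_i$ into the intersection of the hyperplanes $x_{i'}u^*_{j'}=x_{j'}u^*_{i'}$'' before applying $\pi$.
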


\begin{proof}
The number of orthants in the minimal carrier of the geodesic from $T^*$ to $T$ is less than $m-|A_0|$ if and only if the dimension $j_i$ of some vector $\v^*_i$ is greater than one for $i\geqslant1$. Then, $\chi$ maps the line determined by $\e_i$ in $\mathbb{R}_+^{|A_0|}\times\mathbb{R}^k$ into the subspace of $\mathbb{R}^m$ that is the intersection of the co-dimension one hyperplanes $x_{i'}u^*_{j'}=x_{j'}u^*_{i'}$ in $\mathbb{R}^m$, where $j_1+\cdots+j_{i-1}< i'\not=j'\leqslant j_1+\cdots+j_i$ and where the ordering of the coordinates $u^*_{i'}$, and hence of the $x_{i'}$, is as in the minimal carrier. Then, applying the permutation $\pi$ and using the same notation for the permuted $\boldsymbol x$-coordinates, the result follows.
\end{proof}

\begin{figure}
		\centering
		\includegraphics[width=3in]{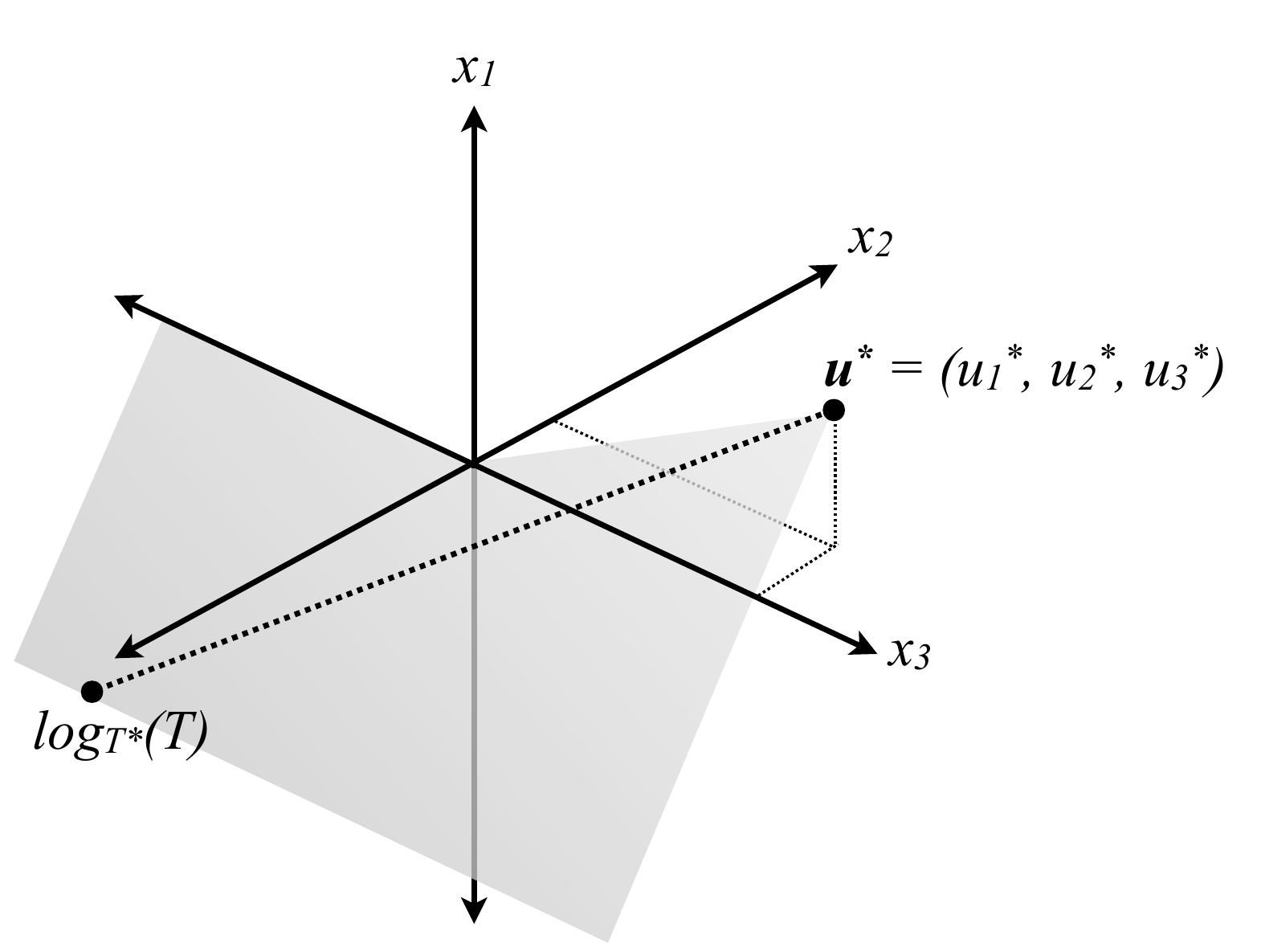}
		\caption{The grey area is part of the hyperplane $x_1 \cdot u^*_2 =  x_2 \cdot u^*_1$, which contains some of the singular points for the log map $log_{T^*}$ for Example~\ref{ex:logmap}.}
		\label{fig:logmap_4}
\end{figure}
For example, see Figure~\ref{fig:logmap_4} for an illustration of one of the hyperplanes for Example~\ref{ex:logmap}.

\vskip 6pt
To describe the limiting behaviour of sample Fr\'echet means, it will be more convenient to have a modified version of the log map, $\Phi_{T^*}$, at $T^*$ defined by 
\begin{eqnarray}
\Phi_{T^*}(T)=\log_{T^*}(T)+\t^*.
\label{eqn3a}
\end{eqnarray}
In the present context, where $T^*$ lies in a top-dimensional stratum, $\Phi_{T^*}(T)=\pi\circ\chi(\v)$.

Note that, when $T^*$ lies in a top-dimensional stratum, the map corresponding to $\Phi_{T^*}$ here obtained in \cite{BLO} in the case of $\TT$ was expressed as the composition of a similarly defined map on $Q_5$, a simpler auxiliary stratified space, with a map from $Q_5$ to $\TT$. Instead of the log map, that map on $Q_5$ was expressed in terms of the gradient of the squared distance function. The relationship between the latter and the log map shows that the resulting expression in \cite{BLO} is equivalent to the one defined here. The derivation of $\Phi_{T^*}$ from $\log_{T^*}$ implicitly requires that the tangent space to $\Tm$ at $T^*$, in which the image of $\log_{T^*}$ lies, be translated to the parallel copy $\mathbb{R}^m$ at the origin, in which it makes sense to add the coordinate vector $\t^*$. As a result, for all $\widetilde T^*$ in the same stratum as $T^*$, the image of $\Phi_{\widetilde T^*}$ will lie in this same subspace $\mathbb{R}^m$. 

\section{Fr\'echet means on a top-dimensional stratum}

Let $\mu$ be a probability measure on $\Tm$ and assume that the Fr\'echet function for $\mu$ is finite. The space $\Tm$ being $CAT(0)$ implies that the Fr\'echet function for $\mu$ is strictly convex so that, in particular, the Fr\'echet mean of $\mu$ is unique when it exists. In this section, we consider the case when this mean, denoted by $T^*$, lies in a top-dimensional stratum. For this, as in the previous section, we identify any tree $\widetilde T^*$ in the stratum of $\Tm$ in which $T^*$ lies with the point in the positive orthant of $\mathbb{R}^m$ having the lengths of the internal edges of $\widetilde T^*$ as coordinates in the canonical order. In particular $T^*=(t^*_1,\cdots,t^*_m)$. 

\vskip 6pt
First, we use the log map to give a necessary and sufficient condition for $T^*$ to be the Fr\'echet mean of $\mu$ as follows, generalising the characterisation of Fr\'echet means on complete and connected Riemannian manifolds of non-negative curvature. In particular, it shows that, when $T$ is a random variable on $\Tm$ with distribution $\mu$ and $T^*$ is in a top-dimensional stratum, then $T^*$ is the Fr\'echet mean of $\mu$ if and only if $T^*$ is the Euclidean mean of the Euclidean random variable $\Phi_{T^*}(T)$.

\begin{lemma}
Assume that the Fr\'echet mean $T^*$ of $\mu$ lies in a top-dimensional stratum. Then, $T^*$ is characterised by the following condition
\begin{eqnarray}
\int_{\Tm}\Phi_{T^*}(T)\,d\mu(T)=T^*.
\label{eqn4}
\end{eqnarray}
\label{lem1a}
\end{lemma}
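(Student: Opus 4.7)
The plan is to exploit the fact that $T^*$ sits in the interior of a top-dimensional stratum, which locally is an open subset of $\mathbb{R}^m$, so that the Fr\'echet function $F(\widetilde{T}^*)=\tfrac{1}{2}\int d(\widetilde{T}^*,T)^2\,d\mu(T)$ may be differentiated with respect to ordinary Euclidean coordinates on a neighbourhood of $T^*$. Since $\Tm$ is $CAT(0)$, $F$ is strictly convex; combined with the fact that $T^*$ lies in the manifold part of $\Tm$, this gives a clean variational characterisation: $T^*$ is the Fr\'echet mean if and only if the Euclidean gradient $\nabla F(T^*)$ vanishes.

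Next I would compute $\nabla F(T^*)$ by differentiating under the integral sign. For each fixed $T$, the first variation formula in a $CAT(0)$ space at a manifold point yields
\begin{eqnarray*}
\nabla_{\widetilde{T}^*}\left.\tfrac{1}{2}d(\widetilde{T}^*,T)^2\right|_{\widetilde{T}^*=T^*}=-\log_{T^*}(T).
\end{eqnarray*}
This is essentially the standard Riemannian identity applied on the open top-dimensional stratum, and it can be verified directly using the piece-wise linear isometric straightening of the geodesic from Lemma \ref{lem1} and its generalisation in Theorem \ref{thm1}: for $T$ in the interior of a maximal cell of the polyhedral subdivision with respect to $T^*$, and $\widetilde{T}^*$ varying in a sufficiently small neighbourhood of $T^*$, the minimal carrier and support do not change, so $d(\widetilde{T}^*,T)$ is the Euclidean norm of $\v(\widetilde{T}^*)-\v^*(\widetilde{T}^*)$ (in the notation of Theorem \ref{thm1}) and its squared version is differentiable in $\widetilde{T}^*$ with derivative given by the displayed formula.

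To pass the derivative through the integral I would check the standard domination hypothesis: on a small neighbourhood of $T^*$ the integrand $\tfrac{1}{2}d(\widetilde{T}^*,T)^2$ has gradient bounded by $d(\widetilde{T}^*,T)+d(T^*,T)$ up to a constant, and this is integrable by the assumption that $F$ is finite (via Cauchy--Schwarz). The trees $T$ for which the form of $\log_{T^*}$ is ambiguous (i.e.\ $T\in\mathcal{D}_{T^*}$) form a lower-dimensional piecewise-linear set, but the identity above holds there too once interpreted as a one-sided or directional derivative coinciding along all admissible forms, so no measure-theoretic issue arises. Combining the interchange with the vanishing of the gradient, one obtains
\begin{eqnarray*}
\int_{\Tm}\log_{T^*}(T)\,d\mu(T)=0,
\end{eqnarray*}
and then \eqref{eqn3a} gives \eqref{eqn4} immediately, since $\Phi_{T^*}(T)=\log_{T^*}(T)+\t^*$. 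For the converse direction, if \eqref{eqn4} holds then $\nabla F(T^*)=0$, and strict convexity of $F$ on the $CAT(0)$ space $\Tm$ forces $T^*$ to be the unique global minimum, hence the Fr\'echet mean of $\mu$.

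The main obstacle will be the first variation identity $\nabla\tfrac{1}{2}d(\cdot,T)^2=-\log_{T^*}(\cdot)$ at $T^*$: although this is standard on Riemannian manifolds, $T^*$ has non-manifold trees arbitrarily close to it in $\Tm$ because strata of codimension one border the top-dimensional stratum, and the log map changes its algebraic form across the cells of the polyhedral subdivision of Definition \ref{def1}. The argument must therefore use the explicit form \eqref{eqn2c} of $\log_{T^*}$ together with the observation, recorded after Definition \ref{def1}, that for $T$ in the interior of a maximal cell the minimal support is stable under small perturbations of $T^*$ within its stratum; this allows the Riemannian-style derivative computation to be carried out cell by cell, with the cells covering $\mu$-almost all $T$.
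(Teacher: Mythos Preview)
Your proposal is correct and follows essentially the same route as the paper: identify the gradient of $\tfrac{1}{2}d(\cdot,T)^2$ at $T^*$ with $-\log_{T^*}(T)$, conclude that the Fr\'echet mean is characterised by $\int\log_{T^*}(T)\,d\mu(T)=0$, and rewrite this via \eqref{eqn3a}. The paper's proof is much terser---it simply asserts the gradient identity and refers to \cite{BLO} for the resulting characterisation---whereas you spell out the justification (stability of the minimal support on maximal cells, domination for differentiating under the integral, and strict convexity for the converse), which is a reasonable elaboration of what the paper leaves implicit.
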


\begin{proof}
It can be checked that, since $T^*$ lies in a top-dimensional stratum, the squared distance $d(T^*,T)^2$ is differentiable at $T^*$ and its gradient at $T^*$ is $-2\log_{T^*}(T)$. Thus, as discussed in \cite{BLO} $T^*$, lying in a top-dimensional stratum, is the Fr\'echet mean of a given probability measure $\mu$ on $\Tm$ if and only if
\[\int_{\Tm}\log_{T^*}(T)\,d\mu(T)=0.\]
Then, the required result follows by re-expressing the above condition for $T^*$ to be the Fr\'echet mean of $\mu$ in terms of $\Phi_{T^*}$ given by \eqref{eqn3a}.
\end{proof}

The derivation of the central limit theorem for Fr\'echet means in $\Tm$ requires the study of the change of $\Phi_{T^*}$ as $T^*$ changes with $T$ remaining fixed. For this we recall that, for a fixed $T$, the minimal support for the geodesic between $T^*$ and $T$ determines a particular maximal cell, in which $T$ lies, of the polyhedral subdivision with respect to $T^*$. When the minimal support for the geodesic between $\widetilde T^*$ and $T$ is the same as that for $T^*$ and $T$, we shall say that the two resulting maximal cells \textit{correspond to each other}. We have the following result on the derivative of $\Phi_{T^*}$ with respect to $T^*$, noting that the derivative of the map 
\[(x_1,\cdots,x_l)\mapsto\frac{1}{\|(x_1,\cdots,x_l)\|}(x_1,\cdots,x_l)\]
is
\[M^\dagger_{(x_1,\cdots,x_l)}=\frac{1}{\|(x_1,\cdots,x_l)\|}I_l-\frac{1}{\|(x_1,\cdots,x_l)\|^3}\begin{pmatrix}x_1\\\vdots\\x_l\end{pmatrix} \begin{pmatrix}x_1&\cdots&x_l\end{pmatrix},\]
where in particular, when $l=1$, $M^\dagger_{x_1}=0$.

\begin{lemma}
Assume that $T^*\in\Tm$ lies in a top-dimensional stratum. Then, for any fixed $T\in\Tm$ lying in the interior of a maximal cell of the polyhedral subdivision with respect to $T^*$, $\Phi_{T^*}(T)$ is differentiable with respect to $T^*$. Moreover, for such $T$, if $\v^*_i$ is as defined in \eqref{eqn2b} prior to Theorem $\ref{thm1}$ and $\v_i$ defined analogously for $i=1,\cdots,k$, where $k=k(T^*,T)$, then the derivative of $\Phi_{T^*}(T)$ at $T^*$, with respect to $T^*$, is given by
\begin{eqnarray}
M_{T^*}(T)=P^\top_{T^*\!,\,T}\hbox{\rm diag}\{v_1M^\dagger_{\v^*_1},\cdots, v_kM^\dagger_{\v^*_k}\}\,P_{T^*\!,\,T}
\label{eqn4a}
\end{eqnarray}
where $v_i=\|\v_i\|$ and $P_{T^*\!,\,T}$ denotes the matrix representing the permutation $\pi$ defined by \eqref{eqn1b}.
\label{lem2}
\end{lemma}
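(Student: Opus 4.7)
The proof I would write is essentially a chain-rule computation applied to the factorisation $\Phi_{T^*}(T)=\pi\circ\chi(\v)$ from \eqref{eqn2a}, with the real work concentrated in justifying the local rigidity of the combinatorial data that define $\pi$ and $\chi$.

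\textbf{Local rigidity of the combinatorial data.} Because $T$ lies in the interior of a maximal cell of the polyhedral subdivision with respect to $T^*$, the minimal support $(\mathcal{A},\mathcal{B})$ for the geodesic from $T^*$ to $T$ is unique, and by the continuity of the polyhedral subdivision discussed after Definition \ref{def1} it persists as the unique minimal support for every $\widetilde{T}^*$ in a sufficiently small neighbourhood of $T^*$ inside its top-dimensional stratum. Consequently the carrier number $k$, the partitions $\{A_i\}$ and $\{B_j\}$, the permutation $\pi$ with matrix $P_{T^*,T}$, the common-edge vector $\v_0$, and the scalars $v_i=\|\v_i\|$ are all locally constant in $T^*$. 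The hypothesis that $T^*$ lies in the interior of a top-dimensional stratum also guarantees $\v_i^*\neq0$ for every $i\geqslant 1$, so each normalisation map $\v_i^*\mapsto\v_i^*/\|\v_i^*\|$ is smooth at the point in question.

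\textbf{Chain rule with block-diagonal structure.} Under these fixed combinatorial data the only way $T^*$ enters $\chi(\v)$ is through the unit vectors $\v_i^*/\|\v_i^*\|$, since $\v$ itself and all of its coordinates depend only on $T$ and the frozen partitions. I would view $\Phi_{T^*}(T)$ as the composition
\[
\t^*\;\xrightarrow{\;P^{\top}_{T^*,T}\;}\;\u^*=(\v_0^*,\v_1^*,\dots,\v_k^*)\;\longrightarrow\;\chi(\v)\;\xrightarrow{\;P_{T^*,T}\;}\;\Phi_{T^*}(T),
\]
and differentiate block by block. The first and last arrows are linear and contribute $P^{\top}_{T^*,T}$ and $P_{T^*,T}$ respectively. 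The middle arrow decouples across the blocks $A_0,A_1,\dots,A_k$ of $\u^*$: the $A_0$-block derivative is zero (since $\chi$ is the identity there and $\v_0$ is $T^*$-independent), and for $i\geqslant 1$ the block derivative is a scalar multiple of the Jacobian of the normalisation map at $\v_i^*$, which is exactly $M^{\dagger}_{\v_i^*}$ by the identity recalled just before the lemma (reducing to $0$ when $|A_i|=1$, as noted). Collecting the scalar $v_i$ factors gives the block-diagonal middle matrix $\mathrm{diag}\{v_1M^{\dagger}_{\v_1^*},\dots,v_kM^{\dagger}_{\v_k^*}\}$, and sandwiching it between $P_{T^*,T}$ and $P^{\top}_{T^*,T}$ in the manner dictated by the compatible ordering conventions of Lemma \ref{lem1} yields exactly \eqref{eqn4a}.

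\textbf{Main obstacle.} The differentiation itself is routine; the conceptual step is the first one, namely certifying that $\chi$ has a single algebraic expression in a neighbourhood of $T^*$, because $\chi$ is defined in terms of combinatorial data ($\pi$, the partition $\mathcal{A}$) that could in principle jump under a small perturbation of $T^*$. This is precisely what the interior-of-a-maximal-cell hypothesis, together with the continuity of the polyhedral subdivision with respect to $T^*$, is designed to rule out. A secondary but more mechanical difficulty is being careful with the reordering induced by $\pi$ so that the block structure on the $A_i$ side of $\chi(\v)$ is matched with the canonical ordering on the $\t^*$ side; this is what forces the outer $P_{T^*,T}$ and $P^{\top}_{T^*,T}$ into the final formula in the order stated.
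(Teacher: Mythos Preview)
Your proposal is correct and follows essentially the same approach as the paper: establish that the combinatorial data (support, carrier number, permutation $\pi$) are locally constant because $T$ lies in the interior of a maximal cell, then differentiate the normalisation maps $\v_i^*\mapsto \v_i^*/\|\v_i^*\|$ block by block and conjugate by the permutation. The paper phrases the computation as a first-order Taylor expansion of $\Phi_{\tilde T^*}(T)-\Phi_{T^*}(T)$ rather than as a chain rule, but this is purely cosmetic.
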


Note that, for the sub-matrix $v_iM^\dagger_{\v^*_i}$ to be non-zero, $\v^*_i$ must be at least 2-dimensional and, by definition, $v_i$ is non-positive so that $T$ must lie in $\mathcal{S}^{\phantom{A}}_{T^*}$. In particular, if $k(T^*,T)=m-|A_0|$, in other words, if the geodesic between $T^*$ and $T$ is `straight', then the derivative of $\Phi_{T^*}(T)$ at $T^*$ is zero. This could be seen directly: since, in that case, the tree space geodesic between $T^*$ and $T$ would be a Euclidean geodesic between them. Then, $\log_{T^*}(T)=\t-\t^*$ so that $\Phi_{T^*}(T)=\t$ independent of $\t^*$.

\begin{proof}
By the discussion preceding the lemma, the edges common to $T^*$ and $T$ will make no contribution to the derivative. Since the polyhedral subdivision is continuous with respect to $T^*$, it is sufficient to show that, when $\tilde T^*$ is sufficiently close to $T^*$, so that in particular $T^*$ and $\tilde T^*$ lie in the same top stratum and $T$ lies in the interior of the corresponding maximal cells of the polyhedral subdivisions with respect to $T^*$ and $\widetilde T^*$, we have
\begin{eqnarray}
\begin{array}{rcl}
&&\Phi_{\tilde T^*}(T)-\Phi_{T^*}(T)\\
&\approx&(\tilde T^*-T^*)\,P^\top_{T^*\!,\,T}\hbox{\rm diag}\{v_1M^\dagger_{\v^*_1},\cdots,v_kM^\dagger_{\v^*_k}\}\,P_{T^*\!,\,T}\\
&&+\|T\|\,o(\|\tilde T^*-T^*\|).
\end{array}
\label{eqn5}
\end{eqnarray}
To show \eqref{eqn5}, it is sufficient to assume that $T^*$ and $T$ have no common edge. Moreover, since $\pi^{\phantom{A}}_{T^*\!,\,T}$, and so $P_{T^*\!,\,T}$, is a linear map, its derivative is identical with itself. Hence, by applying the appropriate permutation to re-order the $\tilde\u^*$ and $\u$ corresponding to $\widetilde T^*$ and $T$ when necessary, it is sufficient to show that
\begin{eqnarray*}
&&\{\Phi_{\tilde T^*}(T)-\Phi_{T^*}(T)\}P^\top_{T^*\!,\,T}\\
&\!\!\!\!\approx&\!(\tilde\u^*-\u^*)\,\hbox{\rm diag}\{v_1M^\dagger_{\v^*_1},\cdots, v_kM^\dagger_{\v^*_k}\}+\|T\|\,o(\|\tilde T^*-T^*\|).
\end{eqnarray*}

Since $T$ lies in the interior of a maximal cell of the polyhedral subdivision of $\Tm$ with respect to $T^*$, then $v^*_i/v_i>v^*_{i+1}/v_{i+1}$ for all $i$, where all $v_i$ are negative. By continuity, all these strict inequalities hold when $v^*_i$ is replaced by $\tilde v^*_i$ if $\widetilde T^*$ is sufficiently close to $T^*$. Hence, $T$ lies in the interior of a maximal cell of the polyhedral subdivision of $\Tm$ with respect to $\widetilde T^*$. Thus, the only difference between the expressions for $\Phi_{T^*}(T)$ and $\Phi_{\widetilde T^*}(T)$ is that $\v^*$ and $\v^*_i$ in the former are replaced by $\tilde\v^*$ and $\tilde \v^*_i$ respectively in the latter. It follows that, in this case, the difference $\{\Phi_{\tilde T^*}(T)-\Phi_{T^*}(T)\}P^\top_{T^*\!,\,T}$ can be expressed as 
\[\left(v_1\left(\frac{1}{\|\tilde\v^*_1\|}\tilde\v^*_1-\frac{1}{\|\v^*_1\|}\v^*_1\right),\cdots,v_k\left(\frac{1}{\|\tilde\v^*_k\|}\tilde\v^*_k-\frac{1}{\|\v^*_k\|}\v^*_k\right)\right).\]
The required result follows by applying the first order Taylor expansion to each sub-vector component and using the formula preceding the statement of the Lemma.   
\end{proof}

If $T$ lies on the boundary of a maximal cell of the polyhedral subdivision of $\Tm$ with respect to $T^*$, each choice of maximal cell of the polyhedral subdivision with respect to $T^*$ will determine a support for the geodesic from $T^*$ to $T$. If we restrict the neighbouring $\widetilde T^*$ to move from $T^*$  in a direction such that $T$ lies in the corresponding maximal cell of the polyhedral subdivision with respect to $\widetilde T^*$, then the argument in the proof for Lemma \ref{lem2} still holds. Thus, $\Phi_{T^*}(T)$ will have all directional derivatives, at $T^*$, with respect to $T^*$ having similar forms to that given in Lemma \ref{lem2}. However, some different directions will require different choices of maximal cell of the polyhedral subdivision with respect to $T^*$ in which $T$ lies. Thus, the directional derivative will have different forms and $\Phi_{T^*}(T)$ will not be differentiable. 

\vskip 6pt
Lemma \ref{lem2} enables us to obtain the limiting distribution of the sample Fr\'echet means of a sequence of \textit{iid} random variables on $\Tm$ when the Fr\'echet mean of the underlying probability measure lies in a top-dimensional stratum as follows, recalling that $\mathcal{D}_{T^*}$, defined in Definition $\ref{def1}$, is the subset of $\Tm$ consisting of all trees that lie on the boundaries of maximal cells determined by the polyhedral subdivision with respect to $T^*$. On one hand, the result shows that, in this case, the limiting distribution, being a Gaussian distribution, bears a certain similarity to that of the sample means of Euclidean random variables. On the other hand, recalling that the derivative of $\Phi_{T^*}(T)$ at $T^*$ is zero if $T\not\in\mathcal{S}^{\phantom{A}}_{T^*}$, it also shows that the role played by $\mathcal{S}^{\phantom{A}}_{T^*}$ in the limiting behaviour of the sample Fr\'echet means is reflected in the covariance structure of the Gaussian distribution, departing from the limiting distribution of the sample means of Euclidean random variables. 

\begin{theorem}
Let $\mu$ be a probability measure on $\Tm$ with finite Fr\'echet function and with Fr\'echet mean $T^*$ lying in a top-dimensional stratum. Assume that $\mu\left(\mathcal{D}_{T^*}\right)=0$. Suppose that $\{T_i\,:\,i\geqslant1\}$ is a sequence of iid random variables in $\Tm$ with probability measure $\mu$ and denote by $\hat T_n$ the sample Fr\'echet mean of $T_1,\cdots,T_n$. Then,
\[\sqrt{n}(\hat T_n-T^*)\buildrel{d}\over\longrightarrow N(0,A^\top VA),\qquad\hbox{ as }n\rightarrow\infty,\]
where $V$ is the covariance matrix of the random variable $\log_{T^*}(T_1)$, or equivalently that of $\Phi_{T^*}(T_1)$, and  
\begin{eqnarray}
A=\left\{I-E\left[M_{T^*}(T_1)\right]\right\}^{-1},
\label{eqn5a}
\end{eqnarray}
assuming that this inverse exists, and where $M_{T^*}(T)$ is the $m\times m$ matrix defined by \eqref{eqn4a}.
\label{thm2}
\end{theorem}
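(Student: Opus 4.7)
The plan is to follow the classical M-estimator argument for the central limit theorem, adapted to the piecewise-smooth structure of $\Phi_{T^*}$ on $\Tm$. The first ingredient is consistency: since $\Tm$ is a $CAT(0)$ space, the Fr\'echet function is strictly convex, and a standard Ziezold-type argument (compare the manifold case in \cite{BP} and the tree-space case in \cite{MOP}) gives $\hat T_n \to T^*$ almost surely. In particular, for $n$ sufficiently large $\hat T_n$ lies in the same top-dimensional stratum as $T^*$, and Lemma \ref{lem1a} applied to the empirical measure yields the estimating equation $\frac{1}{n}\sum_{i=1}^n \Phi_{\hat T_n}(T_i) = \hat T_n$. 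Subtracting the population identity $\int\Phi_{T^*}(T)\,d\mu(T)=T^*$ and adding and subtracting $n^{-1}\sum_i\Phi_{T^*}(T_i)$ recasts this as
\[ \hat T_n - T^* = \Bigl(\tfrac{1}{n}\sum_{i=1}^n \Phi_{T^*}(T_i) - T^*\Bigr) + \tfrac{1}{n}\sum_{i=1}^n\bigl(\Phi_{\hat T_n}(T_i) - \Phi_{T^*}(T_i)\bigr). \]

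The next step is to linearise the second bracket using Lemma \ref{lem2}. The assumption $\mu(\mathcal{D}_{T^*})=0$ guarantees that almost surely each $T_i$ lies in the interior of a maximal cell of the polyhedral subdivision at $T^*$, and continuity of that subdivision ensures the same cell contains $T_i$ with respect to $\hat T_n$ for $n$ large. Hence for such $T_i$,
\[ \Phi_{\hat T_n}(T_i) - \Phi_{T^*}(T_i) = (\hat T_n - T^*) M_{T^*}(T_i) + \|T_i\|\,o(\|\hat T_n - T^*\|). \]
Writing $\bar M_n = n^{-1}\sum_{i=1}^n M_{T^*}(T_i)$ and rearranging then gives
\[ (\hat T_n - T^*)\bigl(I - \bar M_n\bigr) = \Bigl(\tfrac{1}{n}\sum_{i=1}^n \Phi_{T^*}(T_i) - T^*\Bigr) + R_n, \]
where $R_n = o(\|\hat T_n - T^*\|)\cdot n^{-1}\sum_i\|T_i\|$.

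To conclude, I would invoke the strong law of large numbers (so $\bar M_n \to E[M_{T^*}(T_1)]$ almost surely and hence $(I-\bar M_n)^{-1}\to A$), the classical $m$-dimensional central limit theorem applied to the iid vectors $\Phi_{T^*}(T_i)$ (whose second moment is finite because the Fr\'echet function is), yielding that $\sqrt{n}\bigl(n^{-1}\sum\Phi_{T^*}(T_i) - T^*\bigr)$ converges in distribution to $N(0,V)$, and Slutsky's lemma, producing $\sqrt{n}(\hat T_n - T^*) \to N(0,A^\top V A)$ in distribution, provided $\sqrt{n}R_n = o_p(1)$. The principal obstacle is precisely this remainder control. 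It hinges on first establishing the rate $\hat T_n - T^* = O_p(n^{-1/2})$, which one obtains by bootstrapping the displayed identity once: the leading term is $O_p(n^{-1/2})$ by the CLT, while $R_n$ is of strictly smaller order in $\|\hat T_n - T^*\|$ because $n^{-1}\sum\|T_i\|\to E\|T_1\|<\infty$. A more delicate point is the non-uniformity of the $o(\cdot)$ in the Taylor expansion across different cells of the polyhedral subdivision, where $M_{T^*}$ takes different algebraic forms. Since there are only finitely many such forms, each admitting an explicit bound in terms of the $1/\|\v^*_i\|$ that is locally bounded as $T^*$ varies in its top-dimensional stratum, combining this with $\mu(\mathcal{D}_{T^*})=0$ and dominated convergence upgrades the pointwise Taylor remainder to the required integrable one.
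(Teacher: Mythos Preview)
Your proposal is correct and follows essentially the same route as the paper: consistency places $\hat T_n$ in the top stratum, Lemma~\ref{lem1a} for the empirical measure gives the estimating equation, Lemma~\ref{lem2} linearises $\Phi_{\hat T_n}(T_i)-\Phi_{T^*}(T_i)$, and rearranging plus the law of large numbers for $\bar M_n$, the Euclidean CLT for $\Phi_{T^*}(T_i)$, and Slutsky finish the job. If anything you are more explicit than the paper about the remainder control and the finitely-many-cells uniformity argument, which the paper handles by referring back to \cite{BLO}.
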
 

\begin{proof}
The main argument underlying the proof is similar to that of the proof in \cite{BLO} for $\TT$, i.e., to express the difference between the Fr\'echet mean of the underlying probability measure and the sample Fr\'echet means in terms of the difference $\Phi_{\tilde T^*}(T_i)-\Phi_{T^*}(T_i)$. However, the proof in \cite{BLO} relies on an explicit embedding that is only valid for $\TT$. As a consequence of Lemma \ref{lem2}, we can now achieve this for any tree space.  

Since $\hat T_n$ is the Fr\'echet sample mean of $T_1,\cdots,T_n$, then for sufficiently large $n$, $\hat T_n$ will be close to $T^*$ a.s. (cf. \cite{HZ}) and, in particular, lie in the same stratum as $T^*$. Thus, the above results \eqref{eqn4} and \eqref{eqn5} give
\begin{eqnarray*}
\sqrt{n}(\hat T_n-T^*)&=&\frac{1}{\sqrt n}\sum_{i=1}^n\{\Phi_{\hat T_n}(T_i)-T^*\}\\
&=&\frac{1}{\sqrt n}\sum_{i=1}^n\{\Phi_{T^*}(T_i)-T^*\}+\frac{1}{\sqrt n}\sum_{i=1}^n\{\Phi_{\hat T_n}(T_i)-\Phi_{T^*}(T_i)\}\\
&\approx&\frac{1}{\sqrt n}\sum_{i=1}^n\{\Phi_{T^*}(T_i)-T^*\}+\sqrt{n}(\hat T_n-T^*)\frac{1}{n}\sum_{i=1}^nM_{T^*}(T_i)\\
&&+o(\|\hat T_n-T^*\|)\frac{1}{\sqrt n}\sum_{i=1}^n\|T_i\|.
\end{eqnarray*}
Hence,
\begin{eqnarray*}
&&\sqrt{n}(\hat T_n-T^*)\left\{I-\frac{1}{n}\sum_{i=1}^nM_{T^*}(T_i)\right\}\\
&\approx&\frac{1}{\sqrt n}\sum_{i=1}^n\left\{\Phi_{T^*}(T_i)-T^*\right\}+o(\|\hat T_n-T^*\|)\frac{1}{\sqrt n}\sum_{i=1}^n\|T_i\|.
\end{eqnarray*} 
Since $\{\Phi_{T^*}(T_i)\,:\,i\geqslant1\}$ is a sequence of \textit{iid} random variables in $\mathbb{R}^m$ with mean $T^*$ and $\{M_{T^*}(T_i)\,:\,i\geqslant1\}$ is a sequence of \textit{iid} random matrices, the following theorem follows from the standard Euclidean result as in \cite{BLO}.
\end{proof}

\vskip 6pt
Recalling that $M_{T^*}(T_1)=0$ for $T_1$ not lying in the singularity set of $\log_{T^*}$, we see that the contribution to $E\left[M_{T^*}(T_1)\right]$ consists of all singular points of $\log_{T^*}$. For $m=1$, i.e. the case for $\T_{\!3}$, the only possible choice for $k$ is $k=1=m$ which implies that $M_{T^*}(T)\equiv0$, so that the above result for this special case is the same as that obtained in \cite{HHLMMMNOPS}. For $m=2$, i.e. the case for $\TT$, the only possible case for $T$ lying in the singularity set of $\log_{T^*}$ is when $k=1$, which corresponds to the geodesic between $T^*$ and $T$ passing through the origin and $\Phi_{T^*}(T)=-\|T\|\frac{1}{\sqrt{(t^*_1)^2+(t^*_2)^2}}(t^*_1,t^*_2)$. Then, the corresponding $M_{T^*}(T)$ has the expression
\[M_{T^*}(T)=-\|T\|\frac{1}{\|T^*\|^3}\begin{pmatrix}-t^*_2\\t^*_1\end{pmatrix}\begin{pmatrix}-t^*_2&t^*_1\end{pmatrix},\]
so that the above result for this case recovers that in \cite{BLO}.

\vskip 6pt
Note that $\mu$ induces, by $\log_{T^*}$, a probability distribution $\mu'$ on the tangent space of $\Tm$ at $T^*$. Then, the sample Fr\'echet means of $\mu'$ are the standard Euclidean means 
\[\frac{1}{n}\sum_{i=1}^n\log_{T^*}(T_i)=\frac{1}{n}\sum_{i=1}^n\left\{\Phi_{T^*}(T_i)-T^*\right\},\] 
so that the rescaled sample Fr\'echet means have the limiting distribution $N(0,V)$. However, the sample Fr\'echet means of $\mu'$ are generally different from $\log_{T^*}(\hat T_n)$, the log images of the sample Fr\'echet means of $\mu$, and there is no closed expression for the relationship between the two.

\vskip 6pt
It is also interesting to compare the result of Theorem \ref{thm2} with the limiting distributions for the sample Fr\'echet means on Riemannian manifolds obtained in \cite{KL}. Both limiting distributions take a similar form, with the role played by curvature in the case of manifolds being replaced here by the global topological structure of the tree space.

\section{Fr\'echet means on a stratum of co-dimension one}

A stratum $\O(\Sigma)$ of co-dimension one corresponding to the set $\Sigma$ of mutually compatible edge-types arises as a boundary face of a top-dimensional stratum when one, and only one, internal edge of the latter is given length zero so that its two vertices are coalesced to form a new vertex of valency four. The four incident edges determine disjoint subsets $A,B,C,X$ of leaves and root, where $X$ contains the root. Then an additional internal edge may be introduced to $\Sigma$, namely $\alpha$, $\beta$ or $\gamma$ that correspond respectively to the sets of leaves $A\cup B$, $A\cup C$ or $B\cup C$. This gives top-dimensional strata $\O(\Sigma\cup\alpha)$, $\O(\Sigma\cup\beta)$ or $\O(\Sigma\cup\gamma)$, all of whose boundaries contain the stratum $\O(\Sigma)$. Moreover, these are the only such top-dimensional strata. For example, in Figure~\ref{fig:3_quadrants}, the leaves and root subsets are $A = \{a,b\}$, $B = \{c\}$, $C = \{d\}$, and $X = \{r\}$, while the sets of edge-types are $\Sigma = \{ \{a,b\}\}$, $\alpha = \{\{c,d\}\}$, $\beta = \{\{a,b,d\}\}$ and $\gamma= \{ \{a,b,c\}\}$.  

If $A>B>C$ is the canonical order of the sets of leaves, then $\alpha<\beta<\gamma$ is the induced order of the edges and corresponding semi-axes and, if we write the coordinates of a tree $T^*$ in $\O(\Sigma)$ as $(t^*_2,\cdots,t^*_m)$, we can write the coordinates of trees in the neighbouring orthants as $(t^*_\alpha,t^*_\beta,t^*_\gamma,t^*_2,\cdots,t^*_m)$ where precisely two of $t^*_\alpha,t^*_\beta$ and $t^*_\gamma$ are zero, since the remaining $m-1$ edge-types are common to all the trees involved in these three orthants and their common boundary component. Note however that, although the coordinates $(t^*_\alpha,t^*_\beta,t^*_\gamma)$ and $(t^*_2,\cdots,t^*_m)$ can be chosen in canonical order, that will not in general be the case for the full set of coordinates.

It is clear now that the tree space $\Tm$ is not locally a manifold at any tree in the strata of co-dimension one. However, the stratification enables us to define, at a tree in a stratum of positive co-dimension, its tangent cone (cf. \cite{BH}) to consist of all initial tangent vectors of smooth curves \textit{starting} from that tree. Then, the tangent cone to $\Tm$ at a tree in a stratum of co-dimension one is an open book (cf. \cite{HHLMMMNOPS}) with three pages extending each of the three strata and with the stratum of co-dimension one in which the tree lies being extended to form its spine. 

\vskip 6pt
The definition of the log map \eqref{eqn1} applies equally to a tree $T^*$ in a stratum $\sigma$ of co-dimension one: if the geodesic from $T^*$ to $T$ passes through one of the three strata whose boundary includes $\sigma$, the unit vector component of $\log_{T^*}(T)$ is taken in the same direction in the page of the tangent book that corresponds to that stratum. The scalar component of the log map is still the distance between the trees. Similarly, the definition \eqref{eqn3a} for $\Phi_{T^*}$ remains valid in this case. 

From now on, we assume that $T^*$ lies in a stratum $\O(\Sigma)$ of co-dimension one. Although the squared distance $d(T^*,T)^2$ is no longer differentiable at $T^*$, it has directional derivatives along all possible directions. Hence, the condition for $T^*$ to be the Fr\'echet mean of a probability measure $\mu$ on $\Tm$, i.e. the condition for $T^*$ to satisfy
\[\int_{\Tm} d(T^*,T)^2\,d\mu(T)<\int_{\Tm} d(T',T)^2\,d\mu(T)\qquad\hbox{ for any }T'\not=T^*,\]
becomes that the Fr\'echet function for $\mu$ has, at $T^*$, non-negative directional derivatives along all possible directions. To investigate the latter condition, we label the three strata joined at the stratum $\O(\Sigma)$, of co-dimension one, in which $T^*$ lies as the $\alpha$-, $\beta$- and $\gamma$-strata and denote by $\log^\alpha_{T^*}$, $\log^\beta_{T^*}$ and $\log^\gamma_{T^*}$ respectively the modifications of the map $\log_{T^*}$ that agree with $\log_{T^*}$ on the domains for which the image lies in the pages of the tangent book tangent to the $\alpha$-, $\beta$- and $\gamma$-strata respectively and are zero elsewhere. That is, for example,
\[\log^\alpha_{T^*}(T)=\left\{
   \begin{array}{ll}\log_{T^*}(T)&\hbox{ if $T$ is such that $\log_{T^*}(T)$ lies in the page of the}\\&\quad\hbox{ tangent book tangent to the }\alpha\hbox{-orthant}\\0&\hbox{ otherwise}.\end{array} 
   \right.\] 
Write $\e_\alpha$, $\e_\beta$ and $\e_\gamma$ for the outward unit vectors in the tangent book at $T^*$ lying in the page tangent to the $\alpha$-, $\beta$- and $\gamma$-strata respectively and orthogonal to its spine, and define 
\[I_i=\int_{\Tm}\langle\log^i_{T^*}(T),\,\e_i\rangle\,d\mu(T),\qquad i=\alpha,\beta,\gamma.\]
We also define $\log_{T^*}^s$ to be the modification of $\log_{T^*}$ with respect to the spine of the tangent book, the tangent space to $\O(\Sigma)$, analogous to the above $\log^i_{T^*}$. Then, we have the following characterisation of $T^*$ in a stratum of co-dimension one to be the Fr\'echet mean of $\mu$, in terms of the derivatives of the Fr\'echet function along the three directions orthogonal to the tangent space to $\O(\Sigma)$, as well as the Euclidean mean of $\log_{T^*}^s(T)$, where $T$ is a random variable on $\Tm$ with distribution $\mu$.

\begin{lemma}
With the notation and definition above, a given tree $T^*$ in a stratum $\O(\Sigma)$ of co-dimension one is the Fr\'echet mean of a given probability measure $\mu$ on $\Tm$ if and only if
\begin{eqnarray}
I_\alpha\leqslant I_\beta+I_\gamma,\qquad I_\beta\leqslant I_\gamma+I_\alpha,\qquad I_\gamma\leqslant I_\alpha+I_\beta
\label{eqn7}
\end{eqnarray}
and
\begin{eqnarray}
\int_{\Tm}\log^s_{T^*}(T)\,d\mu(T)=0.
\label{eqn7a}
\end{eqnarray}
\label{lem3}
\end{lemma}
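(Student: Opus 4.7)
The plan is to exploit that $\Tm$ is CAT(0), so the Fr\'echet function $F(T)=\tfrac12\int_{\Tm}d(T,T')^2\,d\mu(T')$ is strictly convex along geodesics; therefore $T^*$ is the (necessarily unique) Fr\'echet mean of $\mu$ if and only if the one-sided directional derivative $D_v F(T^*)\geqslant 0$ for every tangent vector $v$ in the open-book tangent cone at $T^*$. Since every initial direction of a geodesic issuing from $T^*$ either lies in the spine $S$ (the tangent space to $\O(\Sigma)$) or enters exactly one of the three adjacent top strata, every such $v$ can be written uniquely as $v=v^s+c\,\e_i$ with $v^s\in S$, $c\geqslant 0$ and $i\in\{\alpha,\beta,\gamma\}$. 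The first variation formula for the squared distance in a CAT(0) space then gives
\[D_v F(T^*)=-\int_{\Tm}\langle v,\log_{T^*}(T')\rangle\,d\mu(T'),\]
the interchange of derivative and integral being justified by the finiteness of the Fr\'echet function.

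Next I would make the inner-product structure on the open-book tangent cone explicit. Each closed page $P_i$ is a Euclidean half-space with boundary $S$ and with $\e_i$ as the outward unit normal to $S$, so restricted to $P_i$ the inner product is the usual Euclidean one; in particular $\e_i\perp S$. For $u=u^s+u^i\e_i\in P_i$ and $w=w^s+w^j\e_j\in P_j$ with $i\ne j$, the geodesic between points at parameter $t$ along the two rays passes through $S$, and unfolding $P_i$ and $P_j$ about their common boundary into a Euclidean slab yields a squared distance $t^2\bigl((u^i+w^j)^2+\|u^s-w^s\|^2\bigr)$. The Alexandrov cosine formula then gives
\[\langle u,w\rangle=\langle u^s,w^s\rangle-u^iw^j,\]
and in particular $\langle\e_i,\e_j\rangle=-1$. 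This open-book inner-product formula is the technical heart of the argument.

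Finally I would partition $\Tm$ according to which of $S$, $P_\alpha\setminus S$, $P_\beta\setminus S$, $P_\gamma\setminus S$ contains $\log_{T^*}(T')$, apply the two inner-product formulas piecewise, and identify $\log^s_{T^*}(T')$ with the component of $\log_{T^*}(T')$ along $S$. After collecting terms this yields
\[D_v F(T^*)=-\Big\langle v^s,\int_{\Tm}\log^s_{T^*}(T')\,d\mu(T')\Big\rangle+c\,(I_j+I_k-I_i),\]
where $\{j,k\}=\{\alpha,\beta,\gamma\}\setminus\{i\}$. Setting $c=0$ and substituting $\pm v^s$ forces \eqref{eqn7a}, while setting $v^s=0$ with $c>0$ and letting $i$ range over $\{\alpha,\beta,\gamma\}$ forces the three triangle inequalities in \eqref{eqn7}; conversely, when both conditions hold the right-hand side is non-negative for every admissible $(v^s,c,i)$, so $D_v F(T^*)\geqslant 0$ in every tangent direction and strict convexity of $F$ identifies $T^*$ as the Fr\'echet mean. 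The main obstacle in executing this plan is the Alexandrov-angle calculation that delivers the open-book inner-product formula for vectors in different pages; once that is in hand, the rest of the argument is a careful bookkeeping of the decompositions of $v$ and of $\log_{T^*}(T')$.
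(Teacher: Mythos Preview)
Your proposal is correct and follows essentially the same route as the paper: both reduce the Fr\'echet-mean condition to non-negativity of the one-sided directional derivatives of the Fr\'echet function in the tangent cone at $T^*$, split directions into a spine component and a multiple of one of the $\e_i$, and read off \eqref{eqn7a} from the spine directions and \eqref{eqn7} from the three transverse directions.

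The only notable difference is one of explicitness. The paper's proof reduces, by linearity, to checking the directions $\e_\alpha,\e_\beta,\e_\gamma$ and then simply says that ``analogously to the deduction in \cite{BLO}'' the non-negativity along $\e_i$ is equivalent to the $i$th triangle inequality. You instead work intrinsically with the CAT(0) first-variation formula $D_vF(T^*)=-\int\langle v,\log_{T^*}(T')\rangle\,d\mu(T')$ and carry out the Alexandrov-angle computation on the open book to obtain $\langle\e_i,\e_j\rangle=-1$ for $i\ne j$, which immediately yields $D_{\e_i}F(T^*)=I_j+I_k-I_i$. This is precisely the content hidden behind the paper's citation, so your argument is more self-contained; conversely, the paper's version is shorter because that calculation has already been done in the $\TT$ case. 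One small caveat: your identification of $\log^s_{T^*}(T')$ with the spine component of $\log_{T^*}(T')$ is exactly what is needed for \eqref{eqn7a} to encode the spine directional derivatives, and it is how the paper uses $\log^s_{T^*}$ in its own proof, even though the verbal definition just above the lemma (``analogous to the above $\log^i_{T^*}$'') could be read more restrictively; you are using it in the same sense the paper does.
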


\begin{proof}
Recall that since $T^*\in\O(\Sigma)$, the condition for $T^*$ to be the Fr\'echet mean of a probability measure $\mu$ on $\Tm$ is that the Fr\'echet function for $\mu$ has, at $T^*$, non-negative directional derivatives along all possible directions. 

For any vector $\w$ at $T^*$ which is tangent to $\O(\Sigma)$, the non-negativity of the directional derivative along $\w$ can be expressed as $\int_{\Tm}\langle\log^s_{T^*}(T),\,\w\rangle\,d\mu(T)\leqslant0$. Since $-\w$ also tangent to $\O(\Sigma)$ at $T^*$, this inequality must be an equality for all such $\w$, which gives \eqref{eqn7a}. Hence, by linearity, the non-negativity of directional derivatives, of the Fr\'echet function for $\mu$, at $T^*$ along all possible directions may be characterised by requiring the non-negativity of the directional derivatives along the $\e_\alpha$, $\e_\beta$ and $\e_\gamma$ directions, together with \eqref{eqn7a}. However, analogously to the deduction in \cite{BLO}, it can be checked that the requirement for the directional derivative along each of the $\e_\alpha$, $\e_\beta$ and $\e_\gamma$ directions to be non-negative is respectively equivalent to each of the inequalities \eqref{eqn7}. 
\end{proof}

To see the relation between the inequalities \eqref{eqn7} and the asymptotic behaviour of sample Fr\'echet means, we will use a folding map $F_\alpha$ (cf. \cite{HHLMMMNOPS}) that operates on the tangent book at $T^*$. The map $F_\alpha$ folds the two pages that are tangent to the $\beta$- and $\gamma$-strata onto each other, so that they form the complement in $\mathbb{R}^m$ of the closure of the page tangent to the $\alpha$-stratum. Define $F_\beta$ and $F_\gamma$ similarly. Then, $F_\alpha\circ\log_{T^*}$ maps $\Tm$ to $\mathbb{R}^m$ and, in fact, is the limit of $\log_{\widetilde T^*}$ when $\widetilde T^*$ tends to $T^*$ from the $\alpha$-stratum. In addition, we modify the definition \eqref{eqn2} of $\chi^{\phantom{A}}_{T^*\!,\,T}(\e_i)$ to be $\pi^{-1}_{T^*\!,\,T}(\e_\alpha)$ when, and only when, the $\v^*_i$ in \eqref{eqn2} contains $t^*_\alpha$ and is 1-dimensional. With this modification and by noting that the argument leading to Lemma \ref{lem1}, as well as its result, still hold when $T^*$ lies in a stratum of co-dimension one, the results of Theorem \ref{thm1} and Lemma \ref{lem2} can be extended to obtain the expression for $F_\alpha\circ\log_{T^*}$ and its derivative, and the analogues with $\beta$ or $\gamma$ replacing $\alpha$, when the necessary care is taken of which stratum is to contain the initial geodesic. Moreover,
\begin{eqnarray}
\int_{\Tm}\langle F_\alpha\circ\log_{T^*}(T),\e_\alpha\rangle\,d\mu(T)=I_\alpha-I_\beta-I_\gamma.
\label{eqn7b}
\end{eqnarray}
These observations lead to the following lemma which extends the results obtained in \cite{HHLMMMNOPS} for open books and in \cite{BLO} for $\TT$ and relates the Fr\'echet means of large samples avoiding a stratum to the strict-positivity of the derivative of the Fr\'echet function along the corresponding orthogonal direction to the tangent space to $\O(\Sigma)$.

\begin{lemma}
Let $T^*$ be the Fr\'echet mean of a given probability measure $\mu$ on $\Tm$, and lie in a stratum $\O(\Sigma)$ of co-dimension one. Assume that $\mu\left(\mathcal{D}_{T^*}\right)=0$, where $\mathcal{D}_{T^*}$ is defined in Definition $\ref{def1}$, and that, at $T^*$, $I_\alpha<I_\beta+I_\gamma$. If $\{T_i\,:\,i\geqslant1\}$ is a sequence of iid random variables in $\Tm$ with probability measure $\mu$ then, for all sufficiently random large $n$, the sample Fr\'echet mean $\hat T_n$ of $T_1,\cdots,T_n$ cannot lie in the $\alpha$-stratum. 
\label{lem4}
\end{lemma}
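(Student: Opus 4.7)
The plan is to proceed by contradiction. Assume, to the contrary, that with positive probability there exists a random subsequence $\{n_j\}$ along which $\hat T_{n_j}$ lies in the $\alpha$-stratum, which is top-dimensional. Applying Lemma \ref{lem1a} to the empirical measure $\hat\mu_{n_j}=n_j^{-1}\sum_{i=1}^{n_j}\delta_{T_i}$, whose Fréchet mean is $\hat T_{n_j}$, gives
\[
\frac{1}{n_j}\sum_{i=1}^{n_j}\log_{\hat T_{n_j}}(T_i) \;=\; 0.
\]
Identifying the tangent space at $\hat T_{n_j}$ with $\mathbb{R}^m$ and pairing with the (ambient) outward unit vector $\e_\alpha$ yields
\[
\frac{1}{n_j}\sum_{i=1}^{n_j}\langle\log_{\hat T_{n_j}}(T_i),\e_\alpha\rangle \;=\; 0.
\]

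The next step is to pass to the limit in this scalar identity and derive a contradiction. By the consistency of Fréchet sample means in $CAT(0)$-spaces cited in \cite{HZ}, $\hat T_n \to T^*$ almost surely, so along $\{n_j\}$ the trees $\hat T_{n_j}$ approach $T^*$ through the $\alpha$-stratum. For each fixed $T \notin \mathcal{D}_{T^*}$, the discussion preceding the statement of the lemma identifies $\lim_{\widetilde T^* \to T^*}\log_{\widetilde T^*}(T) = F_\alpha\circ\log_{T^*}(T)$ when $\widetilde T^*$ approaches $T^*$ within the $\alpha$-stratum; the hypothesis $\mu(\mathcal{D}_{T^*})=0$ then makes this pointwise convergence hold $\mu$-almost everywhere. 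Splitting the left-hand side above as
\[
\frac{1}{n_j}\sum_{i=1}^{n_j}\langle F_\alpha\circ\log_{T^*}(T_i),\e_\alpha\rangle
\;+\; \frac{1}{n_j}\sum_{i=1}^{n_j}\langle\log_{\hat T_{n_j}}(T_i)-F_\alpha\circ\log_{T^*}(T_i),\e_\alpha\rangle,
\]
the first sum converges almost surely, by the strong law of large numbers and \eqref{eqn7b}, to $I_\alpha - I_\beta - I_\gamma$. For the second sum, the explicit piecewise-linear form of $\log_{\widetilde T^*}$ in Theorem \ref{thm1} yields an estimate of the shape
\[
\bigl|\langle\log_{\widetilde T^*}(T)-F_\alpha\circ\log_{T^*}(T),\e_\alpha\rangle\bigr|
\;\leqslant\; C\,d(\widetilde T^*,T^*)\bigl(1+d(T^*,T)\bigr)
\]
for $\widetilde T^*$ in a small neighbourhood of $T^*$ inside the $\alpha$-stratum and for $T\notin\mathcal{D}_{T^*}$; the finiteness of the Fréchet function then provides an integrable majorant and a dominated-convergence argument forces the second sum to zero almost surely along $\{n_j\}$. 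Combining, the left-hand side of the displayed identity converges almost surely to $I_\alpha - I_\beta - I_\gamma < 0$, contradicting the fact that it equals $0$ for every $n_j$. Hence no such subsequence exists almost surely, and $\hat T_n$ eventually leaves the $\alpha$-stratum.

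The principal obstacle is the uniform control of $\log_{\widetilde T^*}(T) - F_\alpha\circ\log_{T^*}(T)$ as $\widetilde T^*$ wanders randomly in the $\alpha$-stratum near $T^*$. The algebraic form of $\log_{\widetilde T^*}$ at a given $T$ depends on which cell of the polyhedral subdivision with respect to $\widetilde T^*$ contains $T$, and that subdivision itself deforms with $\widetilde T^*$; one must therefore show that the piecewise-linear expression for $\log_{\widetilde T^*}$ converges to $F_\alpha\circ\log_{T^*}$ with a common order-$d(\widetilde T^*,T^*)$ error outside a $\mu$-negligible exceptional set. The hypothesis $\mu(\mathcal{D}_{T^*})=0$ is precisely what removes the trees sitting on the boundaries of these cells in the limit, so that the cell assignment of each $T_i$ stabilises for $\widetilde T^*$ close enough to $T^*$ and the Glivenko--Cantelli-type estimate required to execute the dominated-convergence step becomes available.
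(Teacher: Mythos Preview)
Your argument is essentially correct and runs parallel to the paper's: both assume $\hat T_n$ lies in the $\alpha$-stratum, invoke the first-order condition $\frac{1}{n}\sum_i\log_{\hat T_n}(T_i)=0$, pair with $\e_\alpha$, and obtain a contradiction from the fact that the $\e_\alpha$-component of the Euclidean average of $F_\alpha\circ\log_{T^*}(T_i)$ is eventually strictly negative by \eqref{eqn7b} and the strong law.

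The difference lies in how the correction term $\log_{\hat T_n}(T_i)-F_\alpha\circ\log_{T^*}(T_i)$ is handled. You bound it by $C\,d(\hat T_n,T^*)\,(1+d(T^*,T_i))$ and let it vanish as $\hat T_n\to T^*$. The paper instead expands it to first order via Lemma~\ref{lem2}, obtaining the term $(\hat T_n-T^*)\,M_{T^*}(T_i)$, and then checks directly that $M_{T^*}(T_i)\,\e_\alpha$ is a non-positive multiple of $\e_\alpha$ (because $t^*_\alpha=0$ forces $\e_\alpha$ to be an eigenvector of $M_{T^*}(T_i)$ with eigenvalue $\leqslant0$). Since $\langle\hat T_n-T^*,\e_\alpha\rangle>0$, the correction paired with $\e_\alpha$ is itself $\leqslant0$, reinforcing rather than diluting the negativity. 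This sign argument does not rely on $\hat T_n$ being close to $T^*$ for the first-order term, only for the higher-order remainder.

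Your route is conceptually lighter, but the uniform-in-$T$ Lipschitz estimate you need is exactly the point you flag as the principal obstacle, and you do not fully discharge it: the neighbourhood of $\widetilde T^*$ on which the cell assignment of $T$ stabilises depends on $T$, so passing from pointwise control to control of the empirical average requires an additional argument. The paper's approach largely sidesteps this, because the sign of the first-order term is fixed regardless of how close $\hat T_n$ is, and it has the further payoff that the same eigenstructure of $M_{T^*}$ is reused in the proof of Theorem~\ref{thm3}, parts $(b)$ and $(d)$, where equalities replace the strict inequality and a mere vanishing bound would not suffice.
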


\begin{proof}
Since $\hat T_n$ converges to $T^*$ a.s. as $n$ tends to infinity (cf.\cite{HZ}) we only need to show that, for all sufficiently large $n$, $\hat T_n$ cannot lie in the neighbourhood of $T^*$, restricted to the $\alpha$-stratum.

Consider the probability measure $\mu_\alpha$ induced from $\mu$ by $F_\alpha\circ\log_{T^*}$ on the Euclidean space. Then, under the given conditions, it follows from \eqref{eqn7b} that the Euclidean mean of $\mu_\alpha$ lies on the open half of the Euclidean space complement to the page tangent to the $\alpha$-stratum (cf. also \cite{HHLMMMNOPS}). Thus, for all sufficiently large $n$, the Euclidean mean of the induced random variables $F_\alpha\circ\log_{T^*}(T_1),\cdots,F_\alpha\circ\log_{T^*}(T_n)$,
\[\hat T^\alpha_n=\frac{1}{n}\sum_{i=1}^nF_\alpha\circ\log_{T^*}(T_i),\]
does not lie in the closed half of this Euclidean space where the page tangent to the $\alpha$-stratum lies. This implies that, for all sufficiently large $n$, 
\begin{eqnarray}
\langle\hat T^\alpha_n,\e_\alpha\rangle<0.
\label{eqn8}
\end{eqnarray}

If it were possible that, for arbitrarily large $n$, $\hat T_n$ lies in the $\alpha$-stratum, we could obtain a contradiction. Firstly, noting the observations prior to the lemma and following the arguments of the proof for Lemma \ref{lem2}, for all sufficiently large $n$, we have
\begin{eqnarray}
\begin{array}{rcl}
\dfrac{1}{n}\displaystyle\sum_{i=1}^n\Phi_{\hat T_n}(T_i)
&=&\dfrac{1}{n}\displaystyle\sum_{i=1}^nF_\alpha\circ\Phi_{T^*}(T_i)
+\,\,(\hat T_n-T^*)\dfrac{1}{n}\displaystyle\sum_{i=1}^nM_{T^*}(T_i)\\
&&+\,\,o(\|\hat T_n-T^*\|)\dfrac{1}{n}\displaystyle\sum_{i=1}^n\|T_i\|,
\end{array}
\label{eqn9}
\end{eqnarray}
where $M_{T^*}(T)$ is given by \eqref{eqn4a} and $F_\alpha\circ\Phi_{T^*}=F_\alpha\circ\log_{T^*}+T^*$. However, on the one hand, since $\displaystyle\frac{1}{n}\sum_{i=1}^n\Phi_{\hat T_n}(T_i)$ $=\hat T_n$ and since $\hat T_n$ lies in the $\alpha$-stratum, $\langle\hat T_n,\e_\alpha\rangle>0$, so that 
\begin{eqnarray}
\left\langle\frac{1}{n}\sum_{i=1}^n\Phi_{\hat T_n}(T_i),\,\e_\alpha\right\rangle>0.
\label{eqn10}
\end{eqnarray}
While on the other hand, it follows from $\langle T^*,\e_\alpha\rangle=0$ and from \eqref{eqn8} that 
\begin{eqnarray}
\left\langle\frac{1}{n}\sum_{i=1}^nF_\alpha\circ\Phi_{T^*}(T_i),\,\e_\alpha\right\rangle=\langle\hat T^\alpha_n,\e_\alpha\rangle<0.
\label{eqn11}
\end{eqnarray}
It can also be checked that  
\[M_{T^*}(T_i)\,\e_\alpha=\frac{v^\alpha_i}{\|\v^*_{i,s}\|}\e_\alpha,\]
where $v^\alpha_i=v_{i,s}$, if $t_\alpha$ corresponds to a coordinate of $\v^*_{i,s}$ and if the dimension of $\v^*_{i,s}$ is greater than one, and $v^\alpha_i=0$ otherwise. Then, since $v^\alpha_i\leqslant0$, for each $i$ 
\begin{eqnarray}
\langle(\hat T_n-T^*)\,M_{T^*}(T_i),\,\e_\alpha\rangle=v^\alpha_i\langle\hat T_n,\e_\alpha\rangle\leqslant0.
\label{eqn12}
\end{eqnarray}
Equations \eqref{eqn11} and \eqref{eqn12} together imply that, for all sufficiently large $n$, the $e_\alpha$-component of the right hand side of \eqref{eqn9} is negative, which contradicts \eqref{eqn10}.
\end{proof}

With the result of Lemma \ref{lem4}, we now have the limiting distribution of the sample Fr\'echet means on $\Tm$ given by the next theorem, which is the generalisation of the result for $\TT$ given in Theorem 2 in \cite{BLO}. In particular, it shows that the limiting distribution can take any of four possible forms, all related to a Gaussian distribution, depending on the number of the strictly positive derivatives of the Fr\'echet function along the three directions orthogonal to the tangent space to $\O(\Sigma)$. For clarity, we have assumed in the following that the coordinates $(t_i,t_2,\cdots,t_m)$, $i=\alpha,\beta,\gamma$, discussed at the beginning of the section are all in the canonical order, so that they give the coordinates for trees in each of the three strata. Otherwise, a further permutation of the coordinates, which we have suppressed, will be necessary to bring them into canonical order and so to validate the result.

\begin{theorem}
Let $T^*$ in a stratum $\O(\Sigma)$ of co-dimension one be the Fr\'echet mean of a given probability measure $\mu$ on $\Tm$. Assume that $\mu\left(\mathcal{D}_{T^*}\right)=0$, where $\mathcal{D}_{T^*}$ is defined in Definition $\ref{def1}$. Let further $\{T_i\,:\,i\geqslant1\}$ be a sequence of iid random variables in $\Tm$ with probability measure $\mu$ and write $\hat T_n$ for the sample Fr\'echet mean of $T_1,\cdots,T_n$.  
\begin{enumerate}
\item[$(a)$] If all three inequalities in \eqref{eqn7} are strict then, for all sufficiently large $n$, $\hat T_n$ will lie in the stratum $\O(\Sigma)$ and the sequence $\sqrt{n}\{(\hat t_2^n,\cdots,\hat t_m^n)-(t^*_2,\cdots,t^*_m)\}$ of the coordinates of $\sqrt{n}\{\hat T_n-T^*\}$ on the spine will converge in distribution to $N(0,A^\top_sV_sA_s)$ as $n\rightarrow\infty$, where $V_s$ is the covariance matrix of the random variable $\log^s_{T^*}(T_1)$, $A_s=P_s^\top AP_s$, $P_s$ is the projection matrix to the subspace of $\mathbb{R}^m$ with the first coordinate removed and $A$ is as given in $\eqref{eqn5a}$.
\item[$(b)$] If the first inequality in \eqref{eqn7} is an equality and the other two are strict then, for all sufficiently large $n$, $\hat T_n$ will lie in the $\alpha$-stratum and
\[\sqrt{n}\{\hat T_n-T^*\}\buildrel{d}\over\longrightarrow(\max\{0,\eta_1\},\eta_2,\cdots,\eta_m),\quad\hbox{as }n\rightarrow\infty,\] 
where $(\eta_1,\cdots,\eta_m)\sim N(0,A^\top V A)$, $V$ is the covariance matrix of $F_\alpha\circ\log_{T^*}(T_1)$ and $A$ is as in \eqref{eqn5a} with $t^*_1=0$. 
\item[$(c)$] If the first two inequalities in \eqref{eqn7} are equalities and the third is strict then, for all sufficiently large $n$, $\hat T_n$ will lie either in the $\alpha$-stratum or in the $\beta$-stratum and the limiting distribution of $\sqrt{n}\{\hat T_n-T^*\}$, as $n\rightarrow\infty$, will take the same form as that of $(\eta_1,\cdots,\eta_m)$ above, where the coordinates of $\hat T_n$ are taken as $(\hat t_\alpha^n,\hat t_2^n,\cdots,\hat t_m^n)$, respectively $(-\hat t_\beta^n,\hat t_2^n,\cdots,\hat t_m^n)$, if $\hat T_n$ is in the $\alpha$-stratum, respectively the $\beta$-stratum.
\item[$(d)$] If all the equalities in \eqref{eqn7} are actually equalities, then we have the same result as in $(a)$.
\end{enumerate}
\label{thm3}
\end{theorem}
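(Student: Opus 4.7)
The plan is, in each of the four cases, to combine Lemma \ref{lem4}, which uses the strict inequalities in \eqref{eqn7} to rule out certain strata as eventual locations of $\hat T_n$, with the expansion argument of Theorem \ref{thm2} adapted to the resulting (possibly folded) Euclidean domain. The vanishing-mean condition \eqref{eqn7a} supplies the centering on the spine, and the identity \eqref{eqn7b} together with the equalities in \eqref{eqn7} supplies the centering in any folded direction.

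In case (a), Lemma \ref{lem4} applied to each of $\alpha,\beta,\gamma$ forces $\hat T_n\in\O(\Sigma)$ eventually, so that $\hat T_n$ is parameterised by its spine coordinates and the Fr\'echet-mean characterisation reduces via \eqref{eqn7a} to $E[\log^s_{T^*}(T_1)]=0$. Lemma \ref{lem2} applied with $T^*$ and $\hat T_n$ both in $\O(\Sigma)$ gives the derivative of $\Phi_{\hat T_n}$ in the spine directions, and the expansion underlying the proof of Theorem \ref{thm2}, projected via $P_s$, then yields the Gaussian with covariance $A_s^\top V_s A_s$. Case (d) reduces to case (a): the three equalities in \eqref{eqn7} together force $I_\alpha=I_\beta=I_\gamma=0$, and sharpening the bound $\langle\hat T_n^\alpha,\e_\alpha\rangle<0$ in the proof of Lemma \ref{lem4} to the finer $O_p(1/\sqrt n)$ estimate (using the Euclidean CLT in place of the mean shift) still suffices to rule out each off-spine stratum eventually.

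For case (b), Lemma \ref{lem4} applied to $\beta$ and $\gamma$ restricts $\hat T_n$ to $\O(\Sigma)\cup(\alpha\text{-stratum})$ for large $n$. Folding via $F_\alpha\circ\log_{T^*}$ pushes $\mu$ to a distribution $\mu_\alpha$ on $\mathbb R^m$ whose mean is exactly $T^*$, by \eqref{eqn7a}, \eqref{eqn7b} and the equality $I_\alpha=I_\beta+I_\gamma$. Running the analogue of the expansion \eqref{eqn9} in this folded space produces an unconstrained $\mathbb R^m$-valued candidate $\tilde T_n$ with $\sqrt n(\tilde T_n-T^*)\buildrel{d}\over\longrightarrow N(0,A^\top VA)$, where $A$ is as in \eqref{eqn5a} with $t_1^*=0$ because $T^*$ is on the spine. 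When $\langle\tilde T_n,\e_\alpha\rangle>0$, the point $\tilde T_n$ already lies in the $\alpha$-stratum and coincides with $\hat T_n$; when $\langle\tilde T_n,\e_\alpha\rangle\leqslant0$, the Fr\'echet mean $\hat T_n$ is forced onto the spine, with first coordinate zero and spine coordinates equal to the orthogonal projection of $\tilde T_n$. The truncation $\max\{0,\eta_1\}$ in the first coordinate of the limit records exactly this constraint. Case (c) is entirely analogous: only the $\gamma$-stratum is ruled out by Lemma \ref{lem4}, a two-sided folding $F_\gamma$ places the $\alpha$- and $\beta$-strata on opposite half-spaces of a common hyperplane in $\mathbb R^m$, and the sign convention in the statement (flipping the $\beta$-coordinate) writes both branches as a single Gaussian.

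The main obstacle is the transition behaviour in (b) and (c): the expansion \eqref{eqn9} must hold uniformly as $\hat T_n$ crosses between $\O(\Sigma)$ and an adjacent top-dimensional stratum, although the form of $\Phi_{\hat T_n}(T_i)$ changes discontinuously with the page of the tangent book at $\hat T_n$ that contains the initial geodesic direction towards $T_i$. The hypothesis $\mu(\mathcal{D}_{T^*})=0$ ensures that almost surely only finitely many support configurations occur, and a careful case-by-case tracking, mirroring the proof of Lemma \ref{lem4}, shows that the linear approximations on the two sides match along the spine, so that the folded Euclidean CLT combined with the half-space constraint produces the stated limiting distributions.
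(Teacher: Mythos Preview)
Your overall strategy matches the paper's: use Lemma~\ref{lem4} on the strict inequalities to localise $\hat T_n$, then run the Theorem~\ref{thm2} expansion in a suitably folded Euclidean copy. The gaps are in the parts where equalities hold, and they all stem from one missing ingredient you never mention: the special structure of $M_{T^*}(T)$ when $t^*_1=0$. Because the first coordinate of $T^*$ vanishes, every off-diagonal entry in the first row and column of $M_{T^*}(T)$ is zero and the first diagonal entry is non-positive. The paper uses this repeatedly; without it your arguments in (b), (c) and (d) do not close.

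In (b), your claim that when $\langle\tilde T_n,\e_\alpha\rangle\leqslant0$ the spine coordinates of $\hat T_n$ are simply the orthogonal projection of $\tilde T_n$ is exactly what needs proof, and it is the block structure of $M_{T^*}$ that decouples the $\e_\alpha$-component of the expansion from the spine components and makes $(\max\{0,\eta_1^n\},\eta_2^n,\ldots,\eta_m^n)$ the right approximant. In (c), the map you call $F_\gamma$ does the wrong thing: as defined in the paper it folds the $\alpha$- and $\beta$-pages onto each other, not onto opposite half-spaces. The paper instead introduces a signed coordinate $t_1$, positive on the $\alpha$-side and negative on the $\beta$-side, and checks that $\tilde\Phi$ is a.s.\ well defined across the spine; this uses that $I_\gamma=0$ forces $\langle\log^\gamma_{T^*}(T),\e_\gamma\rangle=0$ a.s.\ (non-negative integrand with zero integral), so the induced measure lives off the $\gamma$-page. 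Finally, your handling of (d) fails as written: an $O_p(1/\sqrt n)$ bound on $\langle\hat T_n^\alpha,\e_\alpha\rangle$ does not rule out $\hat T_n$ landing in the $\alpha$-stratum, since its first coordinate could itself be of order $1/\sqrt n$. The paper observes that $I_\alpha=I_\beta=I_\gamma=0$ with non-negative integrands forces $\langle F_\alpha\circ\log_{T^*}(T_i),\e_\alpha\rangle=0$ \emph{almost surely}, and then combines this exact vanishing with the non-positivity of the first diagonal of $M_{T^*}(T_i)$ to obtain the contradiction $\langle\hat T_n-T^*,\e_\alpha\rangle\leqslant0$.
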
 

\begin{proof}
$(a)$ By Lemma \ref{lem4}, when $n$ is sufficiently large, $\hat T_n$ must lie in the stratum $\O(\Sigma)$ of co-dimension one so that it has zero first coordinate, i.e. $\hat T_n=(0,\hat t^n_2,\cdots,\hat t^n_m)$. Noting that $F_\alpha\circ\log^s_{\hat T_n}=\log^s_{\hat T_n}$, the result \eqref{eqn7a} of Lemma \ref{lem3} shows that $\hat t^n_i$, $i=2,\cdots,m$, are the respective coordinates of $\dfrac{1}{n}\displaystyle\sum_{i=1}^n F_\alpha\circ\Phi_{\hat T_n}(T_i)$, the sample Euclidean mean of $F_\alpha\circ\Phi_{\hat T_n}(T_1),\cdots,F_\alpha\circ\Phi_{\hat T_n}(T_n)$. Then a modification of the proof of Theorem \ref{thm2} to restrict it to the relevant coordinates of $\{F_\alpha\circ\Phi_{\hat T_n}(T_i):i\geqslant1\}$ gives the required limiting distribution of $\sqrt{n}\{(\hat t_2^n,\cdots,\hat t_m^n)-(t^*_2,\cdots,t^*_m)\}$. 

\vskip 6pt
$(b)$ We deduce from the assumed strict inequalities, from \eqref{eqn7a} and \eqref{eqn7b} and from Lemma \ref{lem4} that $T^*$ is the Euclidean mean of $F_\alpha\circ\Phi_{T^*}(T_1)$ and that, when $n$ is sufficiently large, $\hat T_n$ can only lie in the closure of the $\alpha$-stratum, so that it has coordinates $\hat T_n=(\hat t_\alpha^n,\hat t_2^n,\cdots,\hat t^n_m)$. 

Write
\begin{eqnarray}
\tilde F_\alpha\circ\Phi_{\hat T_n}(T)=\left\{
   \begin{array}{ll}\Phi_{\hat T_n}(T)&\hbox{if }\hat t_\alpha^n>0\\F_\alpha\circ\Phi_{\hat T_n}(T)&\hbox{if }\hat t_\alpha^n=0.\end{array} \right.
\label{eqn13}
\end{eqnarray}
Then, $\tilde F_\alpha\circ\Phi_{\hat T_n}(T)$ lies in $\mathbb{R}^m$ and, by \eqref{eqn7a}, $\hat t_j^n$, $j=2,\cdots,m$, are the respective coordinates of $\dfrac{1}{n}\displaystyle\sum_{i=1}^n\tilde F_\alpha\circ\Phi_{\hat T_n}(T_i)$. To see relationship between $\dfrac{1}{n}\displaystyle\sum_{i=1}^n\tilde F_\alpha\circ\Phi_{\hat T_n}(T_i)$ and $\hat t_\alpha^n$, we note that, if $\hat t^n_\alpha>0$,
\begin{eqnarray}
\frac{1}{n}\sum_{i=1}^n\tilde F_\alpha\circ\Phi_{\hat T_n}(T_i)=\frac{1}{n}\sum_{i=1}^n\Phi_{\hat T_n}(T_i)=\hat T_n,
\label{eqn13a}
\end{eqnarray}
where the first equality follows from the definition of 
$\tilde F_\alpha\circ\Phi_{\hat T_n}(T)$ and the second follows from Lemma \ref{lem1a} as $\hat T_n$ lies in a top-dimensional stratum. Hence,
\[\left\langle\frac{1}{n}\sum_{i=1}^n\tilde F_\alpha\circ\Phi_{\hat T_n}(T_i),\,\e_\alpha\right\rangle=\langle\hat T_n,\e_\alpha\rangle=\hat t^n_\alpha.\]
On the other hand, if $\hat t^n_\alpha=0$, then $\hat T_n$ lies in $\O(\Sigma)$ and
\[\frac{1}{n}\sum_{i=1}^n\tilde F_\alpha\circ\Phi_{\hat T_n}(T_i)=\frac{1}{n}\sum_{i=1}^n F_\alpha\circ\Phi_{\hat T_n}(T_i).\]
Applying Lemma \ref{lem3} and \eqref{eqn7b} to the empirical distribution centred on $T_1,\cdots,T_n$ with equal weights $1/n$, we also have
\[\left\langle\frac{1}{n}\sum_{i=1}^n\tilde F_\alpha\circ\Phi_{\hat T_n}(T_i),\,\e_\alpha\right\rangle\leqslant0.\]
Thus,
\[\hat t^n_\alpha=\max\left\{0,\,\,\left\langle\frac{1}{n}\sum_{i=1}^n\tilde F_\alpha\circ\Phi_{\hat T_n}(T_i),\,\e_\alpha\right\rangle\right\}.\]

Now, similarly to the proofs of Theorem \ref{thm2} and Lemma \ref{lem4}, the observations prior to Lemma \ref{lem4} imply that
\begin{eqnarray}
&&\frac{1}{\sqrt n}\sum_{i=1}^n\left\{\tilde F_\alpha\circ\Phi_{\hat T_n}(T_i)-T^*\right\}\nonumber\\
&=&\frac{1}{\sqrt n}\sum_{i=1}^n\left\{F_\alpha\circ\Phi_{T^*}(T_i)-T^*\right\}+\frac{1}{\sqrt n}\sum_{i=1}^n\left\{\tilde F_\alpha\circ\Phi_{\hat T_n}(T_i)-F_\alpha\circ\Phi_{T^*}(T_i)\right\}\nonumber\\
&\approx&\frac{1}{\sqrt n}\sum_{i=1}^n\left\{F_\alpha\circ\Phi_{T^*}(T_i)-T^*\right\}+\frac{1}{\sqrt{n}}(\hat T_n-T^*)\sum_{i=1}^nM_{T^*}(T_i)\label{eqn14}\\
&&+o(\|\hat T_n-T^*\|)\frac{1}{\sqrt n}\sum_{i=1}^n\|T_i\|,\nonumber
\end{eqnarray} 
where $M_{T^*}(T)$ is given by \eqref{eqn4a}. Since the first coordinate of $T^*$ is zero, so too are the entries, except for the diagonal one, in the first row and column of $M_{T^*}(T)$ and so also are the corresponding entries in the matrix $A$. Moreover, noting the comments following Lemma \ref{lem2} and the definition of $M^\dagger$ prior to that lemma, we see that the first diagonal entry of $M_{T^*}(T)$ is always non-positive. Thus, the first diagonal entry of $A$ must be positive, so that this is also the case for $\left\{I-\dfrac{1}{n}\displaystyle\sum_{i=1}^nM_{T^*}(T_i)\right\}^{-1}$, when $n$ is sufficiently large. 

Thus, when $\hat t_\alpha^n>0$, it follows from \eqref{eqn13a} and \eqref{eqn14} that
\begin{eqnarray*}
\sqrt{n}(\hat T_n-T^*)
&\approx&\frac{1}{\sqrt n}\sum_{i=1}^n\left\{F_\alpha\circ\Phi_{T^*}(T_i)-T^*\right\}\left\{I-\frac{1}{n}\sum_{i=1}^nM_{T^*}(T_i)\right\}^{-1}\\
&&+o(\|\hat T_n-T^*\|)\frac{1}{\sqrt n}\sum_{i=1}^n\|T_i\|.
\end{eqnarray*} 
In particular, for all sufficiently large $n$, the first coordinate of the random vector given by the first term on the right is positive. When $\hat t_\alpha^n=0$ the above approximation still holds except for the first coordinate. In that case, $\langle(\hat T_n-T^*)M_{T^*}(T_i),\,\e_\alpha\rangle=0$, following from the form of $M_{T^*}(T)$ noted above, and by \eqref{eqn14}, for sufficiently large $n$,  
\[\left\langle\frac{1}{\sqrt n}\sum_{i=1}^n\left\{F_\alpha\circ\Phi_{T^*}(T_i)-T^*\right\},\,\e_\alpha\right\rangle\leqslant0\]
up to higher order terms, which is equivalent to
\[\left\langle\frac{1}{\sqrt n}\sum_{i=1}^n\left\{F_\alpha\circ\Phi_{T^*}(T_i)-T^*\right\}\left\{I-\frac{1}{n}\sum_{i=1}^nM_{T^*}(T_i)\right\}^{-1},\,\e_\alpha\right\rangle\leqslant0\]
up to higher order terms. Hence, for sufficiently large $n$, we have
\[\sqrt{n}(\hat T_n-T^*)
\approx(\max\{0,\eta^n_1\},\eta^n_2,\cdots,\eta^n_m),\]
where
\[(\eta^n_1,\eta^n_2,\cdots,\eta^n_m)=\frac{1}{\sqrt n}\sum_{i=1}^n\left\{F_\alpha\circ\Phi_{T^*}(T_i)-T^*\right\}\left\{I-\frac{1}{n}\sum_{i=1}^nM_{T^*}(T_i)\right\}^{-1},\]
so that the required result follows from a similar argument to that of the proof for Theorem \ref{thm2}. 

\vskip 6pt
$(c)$ In this case, it follows from Lemma \ref{lem3} that $T^*$ is the Euclidean mean both of $F_\alpha\circ\Phi_{T^*}(T_1)$ and of $F_\beta\circ\Phi_{T^*}(T_1)$. Moreover, the integral $I_\gamma$ becomes zero and so, since the integrand is non-negative, the support of the measure on the tangent book at $T^*$ induced by $\mu$ is contained in the union of the leaves tangent to the $\alpha$- and $\beta$-strata together with the spine. 

It is now more convenient to represent the union of the $\alpha$- and $\beta$-strata by coordinates in the two orthants $\{(t_1,\cdots,t_m):t_2,\cdots,t_m\geqslant0\}$ of $\mathbb{R}^m$. For this, we map:
\[(t_\alpha,t_2,\cdots,t_m)\mapsto(t_\alpha,t_2,\cdots,t_m)\hbox{ and } 
(t_\beta,t_2,\cdots,t_m)\mapsto(t_\beta,t_2,\cdots,t_m)R,\]
where $R=\hbox{diag}\{-1,I_{m-1}\}$. Similarly, we define maps $\tilde\Phi_{(t_1,\cdots,t_m)}(T)$ to accord with this by $\tilde\Phi_{(-t_\beta,t_2,\cdots,t_m)}(T)=\Phi_{(t_\beta,t_2,\cdots,t_m)}(T)R$, while $\tilde\Phi_{(t_\alpha,t_2,\cdots,t_m)}=\Phi_{(t_\alpha,t_2,\cdots,t_m)}$. Since
$\Phi_{(0_\alpha,t_2,\cdots,t_m)}(T)=\Phi_{(0_\beta,t_2,\cdots,t_m)}(T)R$, the map $\tilde\Phi$ is indeed a.s. well defined for points $(0,t_2,\cdots,t_m)$. Clearly,
\[\tilde\Phi_{(t_1,t_2\cdots,t_m)}(T)=\left\{\begin{array}{ll}
   \tilde F_\alpha\circ\Phi_{(t_1,t_2,\cdots,t_m)}(T)&\hbox{if }t_1\geqslant0\\ \tilde F_\beta\circ\Phi_{(-t_1,t_2,\cdots,t_m)}(T)R&\hbox{if }t_1\leqslant0\end{array}\right.\]
where $\tilde F_\alpha$, similarly $\tilde F_\beta$, is defined by \eqref{eqn13}.
Under this new coordinate system, since $F_\alpha\circ\Phi_{T^*}(T_1)=F_\beta\circ\Phi_{T^*}(T_1)R$ a.s., we have in particular that 
\begin{eqnarray}
T^*=\int_{\Tm}\tilde\Phi_{(0,t_2^*,\cdots,t_m^*)}(T)\,d\mu(T).
\label{eqn15}
\end{eqnarray}

By Lemma \ref{lem4}, the given assumption also implies that, for sufficiently large $n$, $\hat T_n$ will a.s. lie either in the $\alpha$-stratum or in the $\beta$-stratum. If $\hat T_n$ lies in the $\alpha$-stratum, then $\hat t_\alpha^n>0$ and
\begin{eqnarray}
(\hat t_\alpha^n,\hat t_2^n,\cdots,\hat t_m^n)=\frac{1}{n}\sum_{i=1}^n\Phi_{\hat T_n}(T_i)=\frac{1}{n}\sum_{i=1}^n\tilde\Phi_{(\hat t_\alpha^n,\hat t_2^n,\cdots,\hat t_m^n)}(T_i)
\label{eqn16}
\end{eqnarray}
and, if $\hat T_n$ lies in the $\beta$-stratum with (original) coordinates $\hat T_n=(\hat t_\beta^n,\hat t_2^n,\cdots,\hat t_m^n)$, then
\begin{eqnarray}
(-\hat t_\beta^n,\hat t_2^n,\cdots,\hat t_m^n)=\frac{1}{n}\sum_{i=1}^n\Phi_{\hat T_n}(T_i)R=\frac{1}{n}\sum_{i=1}^n\tilde\Phi_{(-\hat t_\beta^n,\hat t_2^n,\cdots,\hat t_m^n)}(T_i).
\label{eqn17}
\end{eqnarray}
If $\hat T_n$ lies on the stratum $\O(\Sigma)$ of co-dimension one then, by applying the argument in $(b)$ to both $\hat t^n_\alpha=0$ and $\hat t^n_\beta=0$, we also have
\begin{eqnarray}
(0,\hat t_2^n,\cdots,\hat t_m^n)=\frac{1}{n}\sum_{i=1}^n\Phi_{(0_\alpha,\hat t_2^n,\cdots,\hat t_m^n)}(T_i)=\frac{1}{n}\sum_{i=1}^n\tilde\Phi_{(0,\hat t_2^n,\cdots,\hat t_m^n)}(T_i)\quad\hbox{ a.s.}.
\label{eqn18}
\end{eqnarray}
Recalling that, under the new coordinate system,
\begin{eqnarray*}
\hat T_n\equiv\left\{\begin{array}{ll}
   (\hat t_\alpha^n,\hat t_2^n,\cdots,\hat t_m^n)&\hbox{if $\hat T_n$ is in the $\alpha$-stratum}\\(-\hat t_\beta^n,\hat t_2^n,\cdots,\hat t_m^n)&\hbox{if $\hat T_n$ is in the $\beta$-stratum}\end{array}\right.
\end{eqnarray*}   
we have by \eqref{eqn15}, \eqref{eqn16}, \eqref{eqn17} and \eqref{eqn18} that, in terms of the new coordinates,
\begin{eqnarray*}
\sqrt{n}\{\hat T_n-T^*\}&=&\frac{1}{\sqrt n}\sum_{i=1}^n\left\{\tilde\Phi_{T^*}(T_i)-(0,t_2^*,\cdots,t_m^*)\right\}\\
&+&\frac{1}{\sqrt n}\sum_{i=1}^n\left\{\tilde\Phi_{\hat T_n}(T_i)-\tilde\Phi_{T^*}(T_i)\right\}.
\end{eqnarray*}
Hence, since \eqref{eqn14} still holds under this new coordinate system when $\tilde F_\alpha\circ\Phi_{\hat T_n}$ and $F_\alpha\circ\Phi_{T^*}$ there are replaced by $\tilde\Phi_{\hat T_n}$ and $\tilde\Phi_{T^*}$ respectively, a similar argument to that of the proof for Theorem \ref{thm2} shows the central limit theorem now takes the required form.

\vskip 6pt
$(d)$ Noting that all integrands in \eqref{eqn7} are non-negative, the three equalities will together imply that $\log_{T^*}(T_1)=\log^s_{T^*}(T_1)$ a.s., so that for $i=\alpha,\beta,\gamma$ 
\begin{eqnarray}
\left\langle\sum_{i=1}^n\left\{F_i\circ\Phi_{T^*}(T_i)-T^*\right\},\,\e_i\right\rangle=0\quad\hbox{ a.s..}
\label{eqn19}
\end{eqnarray}

On the other hand, if it were possible that, for arbitrarily large $n$, $\hat T_n$ lies in one of the $\alpha$- $\beta$- or $\gamma$-strata, say the $\alpha$-stratum, then $\langle \hat T_n-T^*,\e_\alpha\rangle>0$. On the other hand, since
\begin{eqnarray*}
\hat T_n-T^*&=&\frac{1}{n}\sum_{i=1}^n\left\{\Phi_{\hat T_n}(T_i)-T^*\right\}\\
&\approx&\frac{1}{n}\sum_{i=1}^n\left\{F_\alpha\circ\Phi_{T^*}(T_i)-T^*\right\}+\frac{1}{n}(\hat T_n-T^*)\sum_{i=1}^nM_{T^*}(T_i),
\end{eqnarray*}
and since, as noted in $(b)$, the first diagonal element of $M_{T^*}(T_i)$ is non-positive and the remaining entries in the first row and column of $M_{T^*}(T_i)$ are all zero, we have by \eqref{eqn19} that
\[\langle\hat T_n-T^*,\,\e_\alpha\rangle\approx\left\langle\frac{1}{n}(\hat T_n-T^*)\sum_{i=1}^nM_{T^*}(T_i),\,\e_\alpha\right\rangle\leqslant0.\]
This contradiction implies that, for all sufficiently large $n$, $\hat T_n$ must lie in the stratum $\O(\Sigma)$ of co-dimension one. Thus, the argument for ({\it a}) implies that, when the inequalities in \eqref{eqn7} are all equalities, the central limit theorem for the sample Fr\'echet means takes the same form as that when the three inequalities are all strict.
\end{proof} 

Similar to the note at the end of the previous section, one can also consider the distribution $\mu'$, induced by $\log_{T^*}$ from $\mu$ on the tangent book of $\Tm$ at $T^*$. Then, one can apply the result of \cite{HHLMMMNOPS} to obtain the limiting distribution of the sample Fr\'echet means of $\mu'$. Again, although the limiting distribution obtained in this way retains the local topological feature of the space, the influence of the global topological structure is lost. More importantly, since there is no clear relationship between the sample Fr\'echet means of $\mu$ and $\mu'$, the limiting distribution for the former cannot be easily deduced from that for the latter.

\section{Strata of higher co-dimension}

The structure of tree space in the neighbourhood of a stratum of higher co-dimension is basically similar to, but in detail rather more complex than, that of a stratum of co-dimension one. For example, a stratum $\sigma$ of co-dimension $l$, where $2\leqslant l\leqslant m$, corresponds to a set of $m-l$ mutually compatible edge-types. It arises as a boundary $(m-l)$-dimensional face of a stratum $\tau$ of co-dimension $l'$, where $0\leqslant l'<l$ and when the internal edges of the trees in $\sigma$ are a particular subset of $m-l$ of the internal edges of the trees in $\tau$. For this situation, we say that $\sigma$ bounds $\tau$ and $\tau$ co-bounds $\sigma$. 

Recall from the previous section that the tangent cone to $\Tm$ at a tree $T$ in $\sigma$ consists of all initial tangent vectors to smooth curves starting from $T$, the smoothness only being one-sided at $T$. For simplicity assume, without loss of generality, that under the isometric embedding of $\Tm$ in $\mathbb{R}^M$ all trees in $\sigma$ have zero for their first $l$ coordinates. Then the tangent cone at $T$ has a stratification analogous to that of $\Tm$ itself in the neighbourhood of $T$: for each stratum $\tau$ of co-dimension $l'$ that co-bounds $\sigma$ in $\Tm$ there is a stratum  $\left(\mathbb{R}^{l-l'}_\tau\right)_+\times\mathbb{R}^{m-l}$ in the tangent cone at $T$, which may be identified with a subset of the full tangent space of $\mathbb{R}^M$ at $T$, where $\mathbb{R}^{m-l}$ is the (full) tangent subspace to $\sigma$ at $T$ and $(\mathbb{R}^{l-l'}_\tau)_+$ is the orthant determined by the edge-types that have positive length in $\tau$ but zero length in $\sigma$. For example, the cone point in $\TT$ is a stratum of co-dimension two. Its tangent cone can be identified with $\TT$ itself. This rather involved structure of the tangent cone results in a much more complicated description of the log map and, consequently, of its behaviour. Nevertheless, with the above conventions, it is possible to generalise our expression for the log map to this wider context and hence to obtain analogues of Theorem \ref{thm1} as well as Lemma \ref{lem2}. These results can then be used to describe, in a fashion similar to those of Lemmas \ref{lem3} and \ref{lem4}, certain limiting behaviour of sample Fr\'echet means when their limit lies in a stratum of higher co-dimension. For example, the limiting behaviour of sample Fr\'echet means in $\TT$ when the true Fr\'echet mean lies at the cone point has been studied in \cite{BLO}. The picture given there is incomplete and, although those results can be further refined and improved, it is clear that a complete description of the limiting behaviour of sample Fr\'echet means in the wider context is still a challenge and the global topological structure of the space will play a crucial role.   

\vskip 10pt
\textit{Acknowledgements}. The second author acknowledges funding from the Engineering and Physical Sciences Research Council. The third author acknowledges the support of the Fields Institute.

\end{document}